\newcommand*\dif{\mathop{}\!\mathrm{d}}  
\newcommand{\ceil}[1]{\left\lceil #1 \right\rceil}  
\newcommand*{\orcid}[1]{\href{https://orcid.org/#1}{ORCID: #1}}
\newtheorem{theorem}{Theorem}[section] 
\newtheorem{proposition}{Proposition}[section] 
\theoremstyle{definition} 
\newtheorem{definition}{Definition}[section] 
\newtheorem{example}{Example}[section]
\newtheorem{remark}{Remark}[section] 
\numberwithin{equation}{section} 
\providecommand{\keywords}[1]
{
  \small	
  \textbf{\textit{Keywords: }} #1
}
\providecommand{\MSC}[1]
{
  \small	
  \textit{2020 MSC: } #1   
}
\title{Higher-order fractional equations and related time-changed pseudo-processes}
\author{Fabrizio Cinque$^1$ and Enzo Orsingher$^2$\\
        \small Department of Statistical Sciences, Sapienza University of Rome, Italy \\
        \small $^1$fabrizio.cinque@uniroma1.it, \orcid{0000-0002-9981-149X}\\ 
        \small$^2$enzo.orsingher@uniroma1.it
}
\begin{document}

\maketitle

\begin{abstract}
We study Cauchy problems of fractional differential equations in both space and time variables by expressing the solution in terms of ``stochastic composition" of the solutions to two simpler problems. These Cauchy sub-problems respectively concern the space and the time differential operator involved in the main equation. We provide some probabilistic and pseudo-probabilistic applications, where the solution can be interpreted as the pseudo-transition density of a time-changed pseudo-process. To extend our results to higher order time-fractional problems, we introduce stable pseudo-subordinators as well as their pseudo-inverses. Finally, we present our results in the case of more general differential operators and we interpret the results by means of a linear combination of pseudo-subordinators and their inverse processes.
\end{abstract} \hspace{10pt}

\keywords{Dzherbashyan-Caputo derivative, Pseudo-processes, Fourier transforms, Laplace transforms, Stable subordinator and inverse.}

\MSC{Primary 35R11; Secondary 60K99.}

\section{Introduction}

In this paper we study a method to solve a class of differential problems involving fractional derivatives, in the sense of Dzherbasyhan-Caputo (and more general differential operators), by considering two sub-problems, one of which with solution being interpreted, under some further hypotheses, as a pseudo-density of the generalization of the inverse of a stable subordinator.

Fractional calculus is an emerging field in mathematics which in the last decades has been intensively studied with both theoretical and applied results. Nowadays, Dzherbashyan-Caputo fractional derivatives and the associated Cauchy problems have
been investigated by many authors and, after first relevant applications in physics and mechanics (see for instance \cite{GKMR2014, P1999}), they are deeply used in all related fields of science and engineering; we refer to \cite{SZBCC2018} for a comprehensive collection of applications. We also refer to \cite{CO2024, KM2004, ZH2021} (and references therein) for some further analytic results on fractional differential equations. The Dzherbashyan-Caputo fractional derivative is defined as, for $m\in \mathbb{N}_0$ and $\alpha>-1$,
\begin{equation}\label{derivataCaputo}
\frac{\dif^\alpha}{\dif t^\alpha} f(t) = \begin{cases}
\begin{array}{l l}
\displaystyle\frac{1}{\Gamma(m-\alpha)}\int_0^t (t-s)^{m-\alpha-1}\frac{\dif^m}{\dif s^m}f(s)\dif s, & \ \text{if}\ m-1<\alpha<m,\\[9pt]
\displaystyle\frac{\dif^m}{\dif t^m} f(t), &\ \text{if} \ \alpha = m,
\end{array}\end{cases}
\end{equation}
and its Laplace transform reads, with $\mu>0$,
\begin{equation}\label{trasformataLaplaceDerivataFrazionariaIntroduzione}
\int_0^\infty e^{-\mu t}\frac{\dif^\alpha}{\dif t^\alpha}f(t) \dif t= \mu^{\alpha} \int_0^\infty e^{-\mu t}f(t) \dif t - \sum_{k=0}^{\ceil{\alpha}-1} \mu^{\alpha-k-1} \frac{\dif^{k}}{\dif t^{k}}f\Big|_{t=0}
\end{equation}
where we assume that $\lim_{t\longrightarrow \infty}  e^{-\mu t}\frac{\dif^{k}}{\dif t^{k}}f(t)=0,\ k\ge0$.
\\

We analyze problems of the form, with $\nu_1,\dots,\nu_N>0$ and $\lambda_1,\dots, \lambda_N>0$,
\begin{equation}\label{problemaIntroduzione}
\begin{cases}
\sum_{i=1}^N \lambda_i \frac{\partial^{\nu_i}}{\partial t^{\nu_i}} u(t, x) =  O_x u(t,x),\ \ t\ge0,\ x\in \mathbb{R}^d,\\[7pt]
\displaystyle\frac{\partial^{k} u}{\partial t^{k}}\Big|_{t=0} =f_k(x),\ \ x\in \mathbb{R}^d,\ k=0,\dots,\max\{\ceil{\nu_i}\,:1\le i\le N\} -1.
\end{cases}
\end{equation}
with $O_x$ being a general space differential operator which behaves ``well'' with respect to the Fourier transform, meaning that there exists a function $F$ such that  $\bigl(\mathcal{F}O_x u\bigr)(\gamma) = F(\gamma)\mathcal{F}u(\gamma)$ (see hypothesis (\ref{ipotesiOperatoreF}) for details). 

In the last section of the paper, we also consider problems of the form (\ref{problemaIntroduzione}) with more general time-operators, which behave ``well'' with respect to the Laplace transform (see hypothesis (\ref{trasformataLaplaceOperatoreTempoGenerale}) for details). 

Throughout the paper, we consider the following definition.
\begin{definition}\label{definizioneComposizioneStocastica}
Let $f:[0,\infty)\times\mathbb{R}^d \longrightarrow \mathbb{R}$ and $g:[0,\infty)\times[0,\infty) \longrightarrow \mathbb{R}$ be measureable functions, we define \textit{stochastic composition of $f$ with $g$} the function 
\begin{equation}
f\diamond g\,(t,x) = \int_0^\infty f(s,x) g(t,s)\dif s,
\end{equation}
for $t\ge0,\ x\in\mathbb{R}^d$ such that the integral converges.
\end{definition}

We point out that if $f\in L^\infty\bigl([0,\infty)\times\mathbb{R}^d\bigr)$ and $\int_0^\infty g(\cdot,x)\dif x\in L^\infty\bigl([0,\infty)\bigr) $, then $f\diamond g\in L^\infty\bigl([0,\infty)\times\mathbb{R}^d\bigr)$ and this is the kind of functions we are mostly considering along the paper.


The name ``stochastic composition'' in Definition \ref{definizioneComposizioneStocastica} justifies because if $f$ and $g$ are respectively the transition densities of two stochastic processes $\{F(t)\}_{t\ge0}$ and $\{G(t)\}_{t\ge0}$, then $f\diamond g$ is the transition density of the process $F\circ G = \{F\bigl(G(t)\bigr)\}_{t\ge0}$. Note that $F$ can be a vector-valued process. Below, we often use this interpretation of the stochastic composition of functions, applying it also for pseudo-processes.
\\

One of our main results is that the solution to problem (\ref{problemaIntroduzione}) is the stochastic composition (defined below) of the solutions to two simpler problems, one concerning the space operator (see system (\ref{problemaSpazio})) and the other one concerning the time operator (see system (\ref{problemaTempo}) for the easiest scenario and (\ref{problemaTempoGeneralizzato}) for the general framework).

Under some hypotheses concerning the initial condition of problem (\ref{problemaIntroduzione}), we are able to interpret the solution to the time problem in terms of the pseudo-density of a pseudo-inverse of a pseudo-subordinator, which extends the idea of stable-subordinators to the field of pseudo-processes (we refer to \cite{Z1986} for the theory of stable processes and stochastic subordinators). In particular, the pseudo-subordinator has density solving the fractional heat-type equation (with space variable defined on the positive half-line only),
\begin{equation}\label{equazioneCaloreFrazionaria}
\frac{\partial }{\partial t}u(t,x) = \frac{\partial^\nu}{\partial x^\nu}u(t,x),\ \ \ t,x\ge0,\ \nu>0,
\end{equation}
and its construction follows the well-known method based on the definition of the probabilistic Wiener measure and suitably adapted in the case of signed-measures (see references below).

Several authors tackled the study of processes governed by a real valued density integrating to one, meaning the so-called pseudo-probability densities: originally, their derivation and its corresponding (quasi) signed measure, was connected to higher order generalization of the heat equation, with the pioneering works of Krylov \cite{K1960}, Dalestkii and Fomin \cite{DF1965} and then Hochberg \cite{H1978}, who also introduced an Itò-type calculus for pseudo-processes for the even-order pseudo-process (that is the process with density satisfying the heat equation (\ref{equazioneCaloreFrazionaria}) with space derivative of order $2n, \ n\in\mathbb{N}$, instead of the second-order one). Then, the works of Orsingher \cite{O1992} and Lachal \cite{L2003} contributed to extend the theory to heat-type equation of all possible natural orders, meaning (\ref{equazioneCaloreFrazionaria}) with integer $\nu\ge2$. Furthermore, in these papers (see also \cite{N1997}), in view of the similarity and connections to the theory of stochastic processes, the researchers also studied some functionals of the pseudo-processes, like the first passage time and the sojourn time. We also refer to \cite{BM2015, D2006, L2007, L2012, MO2023} for further results.
\\

The paper is organized as follows. In Section 2 we study the simplest form of problem (\ref{problemaIntroduzione}), that is with $N =1$ and $\lambda_1 = 1$, i.e. considering only one fractional time derivative. This is useful to show the main results in an easier setting and introduce, in Section 3, the probabilistic and pseudo-probabilistic applications. Here, a relevant part concerns the construction of the stable pseudo-subordinator and the corresponding pseudo-inverse, also studying some of their properties in accordance to the theory of genuine probabilistic stable subordinators and their inverses. Finally, in Section 4 we state the main results in a general framework and show some further applications in the case of problems of the form (\ref{problemaIntroduzione}), focusing on the probabilistic and pseudo-probabilistic interpretations, involving linear combination of pseudo-subordinators and the related pseudo-inverse.

\section{Stochastic composition of solutions to fractional Cauchy problems}

Let $d\in \mathbb{N},\nu>0$. We denote by $\mathcal{F}u (\gamma)= \int_{\mathbb{R}^d} e^{i\gamma\cdot x} u(x)\dif x,\ \gamma \in \mathbb{R}^d$ the Fourier transform (when the integral converges) of $u:\mathbb{R}^d\longrightarrow \mathbb{R}$.

Throughout the whole paper we study differential problems under the following hypotheses. Let $O_x$ be a space-differential operator and $u:[0,\infty)\times\mathbb{R}^d\longrightarrow \mathbb{R}$ a differentiable functions such that, for natural $k$ depending on the order of the operator $O_x$,
\begin{equation}\label{ipotesiAsintoticoDerivate}
 \lim_{|x|\rightarrow\infty}\frac{\partial^k}{\partial x^k}u(t,x) = 0,\ \ \ t\ge0,
\end{equation}
where the derivative of order $k$ concerns all the needed combinations of the spacial components (according to $O_x$). Note that $u\in L^\infty$ because of the continuity of $u$ and (\ref{ipotesiAsintoticoDerivate}), which holds at least for $k=0$. Then, we assume that there exists a function $F:\mathbb{R}^d\longrightarrow \mathbb{C}$, such that 
\begin{equation}\label{ipotesiOperatoreF}
\Re\bigl(F(\gamma)\bigr)<0\ \text{ and }\ \bigl(\mathcal{F}O_x u\bigr)(\gamma) = F(\gamma)\mathcal{F}u(\gamma)\ \forall\ \gamma,
\end{equation}
where the condition on the real part is useful for the convergence of some integrals below.
\\

Now, in this section, we deal with the following Cauchy problem
\begin{equation}\label{problemaSpazioTempo}
\begin{cases}
\displaystyle \frac{\partial^{\nu } }{\partial t^{\nu }}u(t, x) =  O_x u(t,x),\ \ t\ge0,\ x\in \mathbb{R}^d,\\[10pt]
\displaystyle\frac{\partial^{k} u}{\partial t^{k}}\Big|_{t=0} = f_k(x),\ \ x\in \mathbb{R}^d,\ k=0,\dots,\ceil{\nu } -1,
\end{cases}
\end{equation}
with $f_k$ admitting Fourier transform $\forall \ k$, that we call the \textit{space-time} problem.

We relate the solution of problem (\ref{problemaSpazioTempo}) to the solutions of two simpler sub-problems. In detail, we consider the following problems:
\begin{equation}\label{problemaSpazio}
\begin{cases}
\displaystyle \frac{\partial }{\partial t}u(t, x) =  O_x u(t,x),\ \ t\ge0,\ x\in \mathbb{R}^d,\\[7pt]
\displaystyle u(0,x) = g(x),\ \ x\in\mathbb{R}^d,
\end{cases}
\end{equation}
with $g$ admitting Fourier transform, that we call the \textit{space} problem;
\begin{equation}\label{problemaTempo}
\begin{cases}
\displaystyle \frac{\partial^{\nu } }{\partial t^{\nu }}u(t, x) +\frac{\partial}{\partial x} u(t,x) = 0,\ \ t, x\ge0,\\[7pt]
\displaystyle u(t,0) = h(t), \ \ t\ge0,\\[5pt]
\displaystyle\frac{\partial^{k} u}{\partial t^{k}}\Big|_{t=0} = a_k(x),\ \ x\ge0,\ k=0,\dots,\ceil{\nu } -1,
\end{cases}
\end{equation}
with $h$ admitting Laplace transform, that we call the \textit{time} problem.
\\

Firstly, we solve the Cauchy problems (\ref{problemaSpazio}) and (\ref{problemaTempo}) and then we study their stochastic composition, say $u_1, u_2$ their corresponding solutions, we study the function
\begin{equation}\label{composizioneSoluzioniSpazioETempo}
u_1\diamond u_2(t,x) = \int_0^\infty u_1(s,x) u_2(t,s) \dif s, \ \ \ t\ge0, x\in\mathbb{R}^d.
\end{equation}
Secondly, we solve problem (\ref{problemaSpazioTempo}) and compare its solution with the function (\ref{composizioneSoluzioniSpazioETempo}).
\\

\textit{Solution to problem (\ref{problemaSpazio}).}

By using the $x$-Fourier transform, problem (\ref{problemaSpazio}) turns into
\begin{equation}\label{problemaSpazioFourier}
\begin{cases}
\displaystyle \frac{\partial }{\partial t}\mathcal{F}u(t, \gamma) =  F(\gamma)\mathcal{F}u(t, \gamma) ,\ \ t\ge0,\ \gamma\in \mathbb{R}^d,\\[7pt]
\displaystyle \mathcal{F}u(0, \gamma)  = \mathcal{F}g(\gamma) ,\ \ \gamma\in\mathbb{R}^d.
\end{cases}
\end{equation}
Then the solution is 
\begin{equation}\label{soluzioneSpazio}
\mathcal{F}u(t,\gamma)= \mathcal{F}g(\gamma)e^{tF(\gamma)},\ \ \  t\ge0,\ \gamma\in \mathbb{R}^d.
\end{equation}
\\

\textit{Solution to problem (\ref{problemaTempo}).}

By using the $t$-Laplace transform, problem (\ref{problemaTempo}) turns into (keep in mind formula (\ref{trasformataLaplaceDerivataFrazionariaIntroduzione}))
\begin{equation}\label{problemaTempoLaplace}
\begin{cases}
\displaystyle  \Bigl( \mu^\nu  \mathcal{L}u(\mu,x) - \sum_{k=0}^{\ceil{\nu}-1}\mu^{\nu-k-1} a_{k}(x) \Bigr) +\frac{\partial}{\partial x} \mathcal{L}u(\mu,x)  = 0,\ \ \mu, x\ge0,\\[10pt]
\displaystyle  \mathcal{L} u(\mu,0) = \mathcal{L}h(\mu), \ \ \mu\ge0.
\end{cases}
\end{equation}
Therefore, the solution reads 
\begin{equation}\label{soluzioneTempo}
\mathcal{L}u(\mu,x) = e^{-\mu^\nu x} \Bigl(\mathcal{L}h(\mu) + \sum_{k=0}^{\ceil{\nu}-1}\mu^{\nu-k-1} \int_0^x e^{\mu^\nu y} a_{k}(y)\dif y  \Bigr),\ \ \mu,x\ge0.
\end{equation}

Let us denote by $u_1$ the solution to problem (\ref{problemaSpazio}), whose $x$-Fourier transform is given in (\ref{soluzioneSpazio}), and by $u_2$ the solution to problem (\ref{problemaTempo}), whose $t$-Laplace transform is given in (\ref{soluzioneTempo}). The stochastic composition of the solutions $u_1,u_2$ (see formula (\ref{composizioneSoluzioniSpazioETempo})) can be expressed by means of its $t$-Laplace-$x$-Fourier transform, that is, for $\mu>0,\gamma\in\mathbb{R}^d$ (remember that $\Re\bigl(F(\gamma)\bigr)<0$),
\begin{align}
\mathcal{L}\mathcal{F}\bigl(u_1\diamond u_2\bigr)(\mu,\gamma) &=\int_0^\infty e^{-\mu t} \dif t \int_{\mathbb{R}^d} e^{i\gamma\cdot x}\dif x \int_0^\infty u_1(s,x) u_2(t,s)\dif s\nonumber\\
& = \int_0^\infty \mathcal{F}g(\gamma)e^{sF(\gamma)}\, e^{-\mu^\nu s} \Bigl(\mathcal{L}h(\mu) + \sum_{k=0}^{\ceil{\nu}-1}\mu^{\nu-k-1} \int_0^s e^{\mu^\nu y} a_{k}(y)\dif y  \Bigr) \dif s \nonumber\\
&= \mathcal{F}g(\gamma)\biggl(\frac{\mathcal{L}h(\mu)}{\mu^\nu-F(\gamma)} +  \sum_{k=0}^{\ceil{\nu}-1}\mu^{\nu-k-1}  \int_0^\infty e^{\mu^\nu y} a_{k}(y)\dif y \int_0^\infty e^{(z+y)(F(\gamma)-\mu^\nu)} \dif z\biggr)  \nonumber\\
& =\frac{ \mathcal{F}g(\gamma)}{\mu^\nu-F(\gamma)}\biggl(\mathcal{L}h(\mu) + \sum_{k=0}^{\ceil{\nu}-1}\mu^{\nu-k-1}  \int_0^\infty e^{F(\gamma) y} a_{k}(y)\dif y \biggr). \label{trasformataSoluzioneComposizione}
\end{align}

\textit{Solution to problem (\ref{problemaSpazioTempo}).}
By means of the $x$-Fourier transform and then the $t$-Laplace transform, we obtain
\begin{equation}\label{problemaSpazioTempoFourierLaplace}
\mu^\nu \mathcal{L}\mathcal{F}u(\mu,\gamma) - \sum_{k=0}^{\ceil{\nu}-1}\mu^{\nu-k-1}\mathcal{F}f_k(\gamma) = F(\gamma)\mathcal{L}\mathcal{F}u(\mu,\gamma)
\end{equation}
and therefore the solution reads
\begin{equation}\label{soluzioneProblemaSpazioTempo}
\mathcal{L}\mathcal{F}u(\mu,\gamma) = \frac{\displaystyle \sum_{k=0}^{\ceil{\nu}-1}\mu^{\nu-k-1}\mathcal{F}f_k(\gamma)}{\displaystyle \mu^\nu - F(\gamma)},\ \ \ \mu\ge0,\gamma\in\mathbb{R}^d.
\end{equation}

By comparing formulas (\ref{soluzioneProblemaSpazioTempo}) and (\ref{trasformataSoluzioneComposizione}), it is clear that we can represent the solution to problem (\ref{problemaSpazioTempo}) as stochastic composition of the solutions of the Cauchy problems (\ref{problemaSpazio}) and (\ref{problemaTempo}) with suitable initial conditions. In detail, it is easy to show that the following conditions are sufficient for such representation:
\begin{itemize}
\item[($a$)] fix $j\in \{0,\dots, \ceil{\nu}-1\}$, $f_j \not = 0$ and $f_k =0,\ k\not =j$, then it is sufficient that $g = f_j, a_k = 0\ \forall\ k$ and $h$ such that $\mathcal{L}h(\mu) = \mu^{\nu-j-1}$,
\item[($b$)] for general $f_k\ \forall\ k$, it is sufficient that $h=0,\, g = \delta$ (i.e. $\mathcal{F}g(\gamma) = 1\ \forall\ \gamma)$ and $a_k$ such that $\int_0^\infty e^{yF(\gamma)}a_k(y)\dif y = \mathcal{F}f_k(\gamma)\ \forall\ \gamma,k$,
\end{itemize}
where, as usual, $\delta$ denotes the Dirac delta function centered in the origin.
\\
Note that in point ($b$), the last condition can also be expressed as $\mathcal{L}a_k\bigl(-F(\gamma)\bigr) = \mathcal{F}f_k(\gamma)$.

The above arguments prove the following theorem.

\begin{theorem}\label{teoremaComposizioneStocasticaSoluzioniSpazioTempo}
The solution to the differential Cauchy problem (\ref{problemaSpazioTempo}), under the hypotheses (\ref{ipotesiOperatoreF}), can be expressed as the stochastic composition of the solution to (the space) problem (\ref{problemaSpazio}) with the solution to (the time) problem (\ref{problemaTempo}), where these problems have initial conditions given in point ($b$) above.
\end{theorem}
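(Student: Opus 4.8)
The plan is to reduce the statement to a computation of the joint $t$-Laplace--$x$-Fourier transform and then to invoke its injectivity. First I would solve the two sub-problems exactly as in the preceding discussion: applying the $x$-Fourier transform to the space problem (\ref{problemaSpazio}) reduces it to the scalar linear ODE (\ref{problemaSpazioFourier}) in the variable $t$, with solution (\ref{soluzioneSpazio}); applying the $t$-Laplace transform to the time problem (\ref{problemaTempo}) and using the transform rule (\ref{trasformataLaplaceDerivataFrazionariaIntroduzione}) reduces it to the first-order linear ODE (\ref{problemaTempoLaplace}) in the variable $x$, with solution (\ref{soluzioneTempo}). I denote by $u_1$ and $u_2$ the corresponding solutions.

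Next I would compute $\mathcal{L}\mathcal{F}(u_1\diamond u_2)$. Because $u_1\in L^\infty$ and, for the data of point ($b$), $u_2(t,\cdot)$ satisfies the integrability noted after Definition \ref{definizioneComposizioneStocastica}, the triple integral appearing in (\ref{trasformataSoluzioneComposizione}) is absolutely convergent as soon as $\Re\bigl(F(\gamma)\bigr)<0$; Fubini's theorem then allows me to integrate first in $s$. Inserting (\ref{soluzioneSpazio}) and (\ref{soluzioneTempo}) and evaluating the resulting elementary exponential integrals --- interchanging, in the double-integral term, the order of the $y$- and $z$-integrations, again legitimate since $\Re\bigl(F(\gamma)\bigr)-\mu^\nu<0$ --- produces exactly the right-hand side of (\ref{trasformataSoluzioneComposizione}).

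Independently, applying the $x$-Fourier transform and then the $t$-Laplace transform to the space-time problem (\ref{problemaSpazioTempo}), using (\ref{ipotesiOperatoreF}) and (\ref{trasformataLaplaceDerivataFrazionariaIntroduzione}), yields the algebraic identity (\ref{problemaSpazioTempoFourierLaplace}) and hence the closed form (\ref{soluzioneProblemaSpazioTempo}). I then specialize (\ref{trasformataSoluzioneComposizione}) to the initial data of point ($b$): with $h=0$ one has $\mathcal{L}h(\mu)=0$; with $g=\delta$ one has $\mathcal{F}g(\gamma)=1$; and the requirement $\int_0^\infty e^{yF(\gamma)}a_k(y)\dif y=\mathcal{F}f_k(\gamma)$, equivalently $\mathcal{L}a_k\bigl(-F(\gamma)\bigr)=\mathcal{F}f_k(\gamma)$, turns the sum in (\ref{trasformataSoluzioneComposizione}) into $\sum_{k=0}^{\ceil{\nu}-1}\mu^{\nu-k-1}\mathcal{F}f_k(\gamma)$. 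Hence $\mathcal{L}\mathcal{F}(u_1\diamond u_2)(\mu,\gamma)$ agrees with the right-hand side of (\ref{soluzioneProblemaSpazioTempo}) for every $\mu>0$ and $\gamma\in\mathbb{R}^d$, and by injectivity of the joint Laplace--Fourier transform on the function class under consideration I conclude that $u_1\diamond u_2$ solves (\ref{problemaSpazioTempo}), which is the assertion.

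The genuinely delicate points are the Fubini interchanges --- precisely what the sign hypothesis $\Re\bigl(F(\gamma)\bigr)<0$ in (\ref{ipotesiOperatoreF}) is designed to secure --- together with checking that the data $a_k$ furnished by the condition $\mathcal{L}a_k\bigl(-F(\gamma)\bigr)=\mathcal{F}f_k(\gamma)$ indeed keep $u_2$ within a class on which the inverse transforms are legitimate; the remaining manipulations are routine bookkeeping with exponentials.
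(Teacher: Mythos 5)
Your proposal is correct and follows essentially the same route as the paper: solve the space problem by Fourier transform and the time problem by Laplace transform, compute the joint Laplace--Fourier transform of the stochastic composition (the Fubini step being covered by $\Re F(\gamma)<0$), and match it with the transform (\ref{soluzioneProblemaSpazioTempo}) of the space-time problem under the data of point ($b$). The added remarks on injectivity of the transforms and on the admissibility of the $a_k$ are sensible refinements of the same argument, not a different method.
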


Let us now consider the limit behavior for $\nu\,\downarrow\,0$ of problem (\ref{problemaSpazioTempo}) and its solution. It is interesting to observe that under some hypothesis (required to exchange limit and integral), the limit of the solution to problem (\ref{problemaSpazioTempo}) is solution to the limit of such differential problem.

\begin{proposition}\label{proposizioneLimiteAZero}
Let us denote by $u_\nu$ the solution to problem (\ref{problemaSpazioTempo}). Assume that there exists the pointwise limit $u_0=\lim_{\nu\,\downarrow\, 0} u_\nu$ and that $\mathcal{L}\mathcal{F} u_0 = \lim_{\nu\,\downarrow\, 0} \mathcal{L}\mathcal{F}u_\nu$. Then, $u_0$ satisfies the problem, for $t\ge0$,
\begin{equation}\label{problemaLimiteSpazioTempo}
\begin{cases}
u(t,x) - f_0(x) =  O_x u(t,x),\ \ \ x\in \mathbb{R}^d,\\[0pt]
\displaystyle\lim_{|x|\longrightarrow\infty} u(t,x) = 0.
\end{cases}
\end{equation}
that is the ordinary Cauchy problem obtained as the limit of problem (\ref{problemaSpazioTempo}).
\end{proposition}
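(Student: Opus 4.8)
The plan is to run the whole argument at the level of the double transform $\mathcal{L}\mathcal{F}$ and only invert at the very end. First I would observe that for $\nu\in(0,1)$ one has $\ceil{\nu}=1$, so the sum in (\ref{soluzioneProblemaSpazioTempo}) collapses to its $k=0$ term:
\[
\mathcal{L}\mathcal{F}u_\nu(\mu,\gamma)=\frac{\mu^{\nu-1}\,\mathcal{F}f_0(\gamma)}{\mu^\nu-F(\gamma)},\qquad \mu>0,\ \gamma\in\mathbb{R}^d.
\]
Since $\Re\bigl(F(\gamma)\bigr)<0$ the denominator never vanishes, so letting $\nu\downarrow0$ and using $\mu^\nu\to1$, $\mu^{\nu-1}\to\mu^{-1}$ together with the standing assumption $\mathcal{L}\mathcal{F}u_0=\lim_{\nu\downarrow0}\mathcal{L}\mathcal{F}u_\nu$ gives
\[
\mathcal{L}\mathcal{F}u_0(\mu,\gamma)=\frac{1}{\mu}\cdot\frac{\mathcal{F}f_0(\gamma)}{1-F(\gamma)}.
\]

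Next I would invert the Laplace transform in $\mu$. Since $1/\mu$ is the Laplace transform of the function identically equal to $1$ on $[0,\infty)$, uniqueness of the Laplace transform yields $\mathcal{F}u_0(t,\gamma)=\mathcal{F}f_0(\gamma)/\bigl(1-F(\gamma)\bigr)$ for every $t\ge0$; in particular $\mathcal{F}u_0(t,\gamma)$ does not depend on $t$, consistent with the disappearance of a genuine time derivative in the limit problem (\ref{problemaLimiteSpazioTempo}). Rearranging, $\bigl(1-F(\gamma)\bigr)\mathcal{F}u_0(t,\gamma)=\mathcal{F}f_0(\gamma)$, and invoking hypothesis (\ref{ipotesiOperatoreF}) to replace $F(\gamma)\mathcal{F}u_0(t,\gamma)$ by $\bigl(\mathcal{F}O_xu_0\bigr)(t,\gamma)$, we obtain $\mathcal{F}\bigl(u_0(t,\cdot)-O_xu_0(t,\cdot)-f_0\bigr)(\gamma)=0$ for all $\gamma$. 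By injectivity of the Fourier transform this is exactly the first line of (\ref{problemaLimiteSpazioTempo}). The decay condition $\lim_{|x|\to\infty}u_0(t,x)=0$ then follows from the standing hypothesis (\ref{ipotesiAsintoticoDerivate}) with $k=0$ applied to $u_0$ — alternatively, since $|1-F(\gamma)|\ge1$ forces $\mathcal{F}u_0(t,\cdot)\in L^1$ whenever $\mathcal{F}f_0\in L^1$, from the Riemann–Lebesgue lemma.

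The computation itself is short; the point where care is really needed is not the algebra but the function-theoretic bookkeeping. The interchange of the limit $\nu\downarrow0$ with the transforms is precisely what is being assumed, so it need not be re-derived; what does have to be checked is that the limiting object $u_0$ genuinely belongs to the class of functions for which the inversion theorems and, above all, the relation $\mathcal{F}(O_xu_0)=F\cdot\mathcal{F}u_0$ of (\ref{ipotesiOperatoreF}) are legitimate — that is, that $u_0(t,\cdot)$ is differentiable in the sense required by $O_x$, decays at infinity, and has an integrable Fourier transform. I would therefore either impose these as part of the notion of admissible solution (as is done tacitly throughout the paper) or derive them from regularity estimates on $u_\nu$ that survive the limit; this is the only genuine obstacle, and it is a regularity issue rather than a computational one.
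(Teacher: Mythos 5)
Your proposal is correct and follows essentially the same route as the paper: pass to the $\mathcal{L}\mathcal{F}$ transform of (\ref{soluzioneProblemaSpazioTempo}), let $\nu\downarrow 0$ using $\mu^{\nu}\to 1$, $\mu^{\nu-1}\to 1/\mu$, invert the Laplace transform to get the $t$-constant Fourier transform (\ref{trasformataFourierLimiteSoluzione}), and identify it with the solution of (\ref{problemaLimiteSpazioTempo}) via (\ref{ipotesiOperatoreF}). The only piece of the paper's proof you omit is the computation (\ref{limiteDerivataCaputo}) showing the Caputo derivative tends to $u(t,x)-u(0,x)$, which only serves to justify calling (\ref{problemaLimiteSpazioTempo}) the limit of problem (\ref{problemaSpazioTempo}), not to verify that $u_0$ solves it.
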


Clearly, a similar (but less interesting) result equivalently holds in the case of the limit $\nu\longrightarrow \nu_0>0$, see also Remark 3.2 of \cite{CO2024} (where the authors work after the application of the Fourier transform, which reduces the equation to an ordinary fractional differential equation in the variable $t$).

\begin{proof}
Assume suitable $u$ such that $\lim_{\nu\,\downarrow\, 0} \int_0^t (t-s)^{-\nu} u'(s)\dif s = \int_0^t \lim_{\nu\,\downarrow\, 0} (t-s)^{-\nu} u'(s)\dif s $ (for instance $u$ such that $u'\in L^\infty\bigl([0,\infty)\bigr)$), then
\begin{equation}\label{limiteDerivataCaputo}
\lim_{\nu\,\downarrow\, 0} \frac{\partial^{\nu } }{\partial t^{\nu }}u(t, x) = \int_0^t \frac{\partial }{\partial s}u(s,x)\dif s = u(t,x) - u(0,x),\ \ \ t\ge0,x\in \mathbb{R}^d,
\end{equation}
where we used the definition (\ref{derivataCaputo}) of fractional derivative.

Thus, by applying the limit (\ref{limiteDerivataCaputo}), problem (\ref{problemaSpazioTempo}) coincides with (\ref{problemaLimiteSpazioTempo}), where in the latter we may omit the variable $t$ because there are just $x$-differential operators.

Now, by keeping in mind the $t$-Laplace $x$-Fourier transform (\ref{soluzioneProblemaSpazioTempo}), we can write
$$  \mathcal{L}\mathcal{F} u_0(\mu,\gamma) = \lim_{\nu\,\downarrow\, 0} \mathcal{L}\mathcal{F}u_\nu (\mu,\gamma)= \frac{1}{\mu}\frac{\mathcal{F}f_0(\gamma)}{1-F(\gamma)}, \ \ \ \mu>0,\gamma\in \mathbb{R}^d,$$
whose $t$-Laplace inverse reads
\begin{equation}\label{trasformataFourierLimiteSoluzione}
\mathcal{F} u_0(t,\gamma) = \frac{\mathcal{F}f_0(\gamma)}{1-F(\gamma)}, \ \ \ t\ge0,x\in \mathbb{R}^d,
\end{equation}
that is constant in $t$. Finally, it is easy to see that (\ref{trasformataFourierLimiteSoluzione}) is the Fourier transform of the solution to the differential problem (\ref{problemaLimiteSpazioTempo}).
\end{proof}

\section{Probabilistic and pseudo-probabilistic applications}

In this section we mainly focus on the Cauchy problem (\ref{problemaSpazioTempo}) with null initial conditions except for the first one, that is 
\begin{equation}\label{problemaSpazioTempoProbabilita}
\begin{cases}
\displaystyle \frac{\partial^{\nu } }{\partial t^{\nu }}u(t, x) =  O_x u(t,x),\ \ t\ge0,\ x\in \mathbb{R}^d,\\[7pt]
u(0,x) = \delta(x),\ \  x\in \mathbb{R}^d,\\[5pt]
\displaystyle\frac{\partial^{k} u}{\partial t^{k}}\Big|_{t=0} =0,\ \ x\in \mathbb{R}^d,\ k=1,\dots,\ceil{\nu } -1.
\end{cases}
\end{equation}

In light of Theorem \ref{teoremaComposizioneStocasticaSoluzioniSpazioTempo} we can interpret the solution to (\ref{problemaSpazioTempoProbabilita}) in terms of subordination with the inverse of a stable subordinator if $\nu\in(0,1]$ and its generalization otherwise. In fact, by keeping in mind point ($a$) before Theorem \ref{teoremaComposizioneStocasticaSoluzioniSpazioTempo}, the \textit{time} problem related to system (\ref{problemaSpazioTempoProbabilita}) is given by 
\begin{equation}\label{problemaTempoProbabilita}
\begin{cases}
\displaystyle \frac{\partial^{\nu } }{\partial t^{\nu }}u(t, x) =  -\frac{\partial }{\partial x}u(t,x),\ \ t,x\ge0,\\[5pt]
 u(t,0) = \mathcal{L}^{-1}\bigl(\mu^{\nu-1}\bigr)(t),\ \ t\ge,0\\[4pt]
\displaystyle\frac{\partial^{k} u}{\partial t^{k}}\Big|_{t=0} =0,\ \ x\ge0,\ k=0,\dots,\ceil{\nu } -1,
\end{cases}
\end{equation}
whose solution has $t$-Laplace transform equal to $\mathcal{L}u(\mu,x) = \mu^{\nu-1 }e^{-\mu^\nu x}$, see (\ref{soluzioneTempo}).
\\

In the case of $\nu\in (0,1]$ the first boundary condition of (\ref{problemaTempoProbabilita}) becomes $u(t,0) = t^{-\nu}/\Gamma(1-\nu)$ since $ \int_0^\infty e^{-\mu t} t^{-\nu}\dif t/ \Gamma(1-\nu) = \mu^{\nu-1}$. Then, it is well-known (see \cite{GKMR2014, Z1986}) that the solution of the \textit{time} problem is the probability transition density of the inverse of a stable subordinator of order $\nu$, $L_\nu$, i.e. the function
\begin{equation}\label{leggeInversoSubordinatore} 
l_\nu(t,x) = \frac{1}{t^\nu}W_{-\nu, 1-\nu}\Bigl(-\frac{x}{t^\nu}\Bigr),\ \ \ t,x\ge0,
\end{equation}
where $W_{\alpha,\beta}(z) = \sum_{k=0}^\infty z^k/\bigl(k!\,\Gamma(\alpha k+\beta)\bigr), $ with $\alpha>-1,\beta,z\in \mathbb{C}$, is the Wright function.
\\

Before generalizing the notion of inverse of stable subordinators in the case of a non-probabilistic ``measure'' and giving further results concerning problem (\ref{problemaSpazioTempoProbabilita}), we give a probabilistic example of Proposition \ref{proposizioneLimiteAZero}. From the results above, we need to study the limit of the inverse of the stable subordinator for $\nu\,\downarrow\, 0$. By considering the $t$-Laplace transform it is easy to show that  $L_\nu(t) \stackrel{d}{\longrightarrow} Y\sim Exp(1)$, so the limit is an exponential random variable not depending on time $t$. In fact, with $ t,x\ge0$,
$$\lim_{\nu\,\downarrow\, 0} l_\nu(t,x) = \mathcal{L}_\mu^{-1} \mathcal{L}_t \Bigl(\lim_{\nu\,\downarrow\, 0} l_\nu(t,x)\Bigr) = \mathcal{L}_\mu^{-1} \Bigl(\lim_{\nu\,\downarrow\, 0} \mu^{\nu-1}e^{-\mu^\nu x}\Bigr) = \mathcal{L}_\mu^{-1}\Bigl( \frac{e^{-x}}{\mu} \Bigr)= e^{-x}.$$
One can also prove this result by means of the explicit form of the transition probability density (\ref{leggeInversoSubordinatore}).

\subsection{General theory of pseudo-processes}\label{sezioneTeoriaGeneralePseudoProcessi}

Let $E \subseteq \mathbb{R}$ and $u:[0,\infty)\times E \longrightarrow \mathbb{R}$ be a continuous function such that for all $t\ge0$, $\int_E u(t,x)\dif x = 1$ and $\int_E|u(t,x)| \dif x =+\infty$. We now show how to define and  construct a pseudo-process $X = \{X(t)\}_{t\ge0}$ taking values in $E$ and having pseudo-density (or kernel or, for brevity, density) $u$. Our construction is inspired by previous remarkable papers, see \cite{H1978, K1960}, where the authors applied a method similar to the Wiener's procedure used for Brownian motion.
\\

Let $\Omega=\{g:[0,\infty)\longrightarrow E\,:\, g\ \text{measurable}\}$ called the \textit{sample path space}. By means of $u$ we construct a set function $Q$ on the functional space $\Omega$. 
\\Let us consider the cylinders in $\Omega$, $C_n(\underline{I},\underline{t})=\{g\in\Omega\,:\,g(t_i)\in I_i, \ i=1,\dots n\}$, with $n\in\mathbb{N}$, $\underline{I}$ is a collection of Borel sets of $E$, $I_1,\dots,I_n$ (i.e. $I_i\in \mathcal{B}(E)\ \forall\ i$) and $\underline{t}$ is a vector of time instants $0\le t_1<\dots<t_n<\infty$. We define the function $Q$ on a cylinder $C_n(\underline{I},\underline{t})$ as follows
\begin{equation}\label{definizionePseudoProbabilita}
Q\bigl(C_n(\underline{I},\underline{t})\bigr) = \int_{I_1}\cdots\int_{I_n} \prod_{i=1}^n u(t_i-t_{i-1},x_i-x_{i-1}) \dif x_i,
\end{equation}
where $t_0,x_0=0$.

From the hypotheses on $u$, we readily obtain that $Q$ is a set function with infinite total variation and it is not a signed-measure, thus the classical measure theory does not apply (and we cannot use Kolmogorov extension theorem because its hypotheses are not satisfied). In order to build a feasible framework for the set function $Q$ we proceed as follows.

First, we extend the set function on the class of all the cylinders $ \{C_n(\underline{I},\underline{t})\,:\, n\in \mathbb{N}, \, \underline{I}\in\mathcal{B}(E)^n,\,\underline{t}\in[0,\infty)^n\ \text{s.t.}\ 0\le t_1<\dots<t_n\}$. Then, in order to avoid the indeterminate form $+\infty-\infty$ and assure the additivity of $Q$, we extend it to the algebra of the cylinders with finite integral, that is
\begin{equation}\label{algebra}
\mathcal{A} = \alpha\Bigl(  \big\{C_n(\underline{I},\underline{t})\,:\, n\in \mathbb{N}, \, \underline{I}\in\mathcal{B}(E)^n,\,\underline{t}\in[0,\infty)^n\ \text{s.t.}\ 0\le t_1<\dots<t_n, \, \big|Q\bigl(C_n(\underline{I},\underline{t})\bigr)\big|<\infty \big\} \Bigr),
\end{equation}
where $\alpha$ denotes the smallest algebra containing the input family (more precisely, $\alpha(\mathcal{F})$ is the intersection of all the algebras containing the family $\mathcal{F}$). 

We call $Q$ the \textit{pseudo-measure with pseudo-density $u$} (with respect to the Lebesgue measure) and the elements of $\mathcal{A}$ \textit{$Q$-measurable} sets. We say that a subset $E\in \mathcal{A}$ is \textit{$Q$-negligible} if $Q(F)=0$ for all $Q$-measurable subsets $F\subseteq E$, i.e. a set is negligible if it has value $0$ and the same holds for all its subsets. We also say that a property is satisfied \textit{$Q$-almost everywhere} ($Q$-a.e.) if it does not hold at most on a negligible set.
\\

Now, we consider the sequence of algebras $\{\mathcal{A}_t\}_{t\ge0}$ where, for $s>0$,
\begin{equation}\label{algebra}
\mathcal{A}_s = \alpha\Bigl(  \big\{C_n(\underline{I},\underline{t})\,:\, n\in \mathbb{N}, \, \underline{I}\in\mathcal{B}(E)^n,\,\underline{t}\in[0,s]^n\ \text{s.t.}\ 0\le t_1<\dots<t_n, \, \big|Q\bigl(C_n(\underline{I},\underline{t})\bigr)\big|<\infty \big\} \Bigr)
\end{equation}
which is an increasing collection of algebras such that $\mathcal{A}_s\uparrow \mathcal{A}$.

Eventually, the pseudo-process $X$ with pseudo-density (or kernel) $u$ is well defined on the pseudo-measurable filtered space $\bigl(\Omega, \mathcal{A}, \{\mathcal{A}_t\}_{t\ge0}, Q\bigr)$ and we write $ Q\bigl(X(t_1)\in I_1,\dots , X(t_n)\in I_n\bigr) = Q\bigl(C_n(\underline{I},\underline{t})\bigr)$.

For $t\ge0$, it is useful to clarify that the element $X(t)$ of the pseudo-process has pseudo-density $u_t (\cdot)= u(t,\cdot)$ which leads to a pseudo-measure $Q_t$ defined on the algebra of the cylinders which keep into account only the time instant $t$, that is 
\begin{equation} \label{algebraPseudoVariabile}
\mathcal{C}_t =\alpha\Bigl( \big\{C(I,t)\,:\, I\in \mathcal{B}(E)\big\} \Bigr).
\end{equation}
Note that $Q_t(I) = Q\bigl(C(I,t)\bigr)$ and therefore it follows that $Q$ restricted to $\mathcal{C}_t$ coincides with $Q_t$. Thus, we can define the pseudo-random variable $X(t)$ on the pseudo-measurable space $(\Omega, \mathcal{C}_t, Q)$.

We now state some definitions about the pseudo-processes which resembles the conventional stochastic statements.

\begin{definition}\label{definizioniPsudoProcessi}
Let $X$ be a pseudo-process with pseudo-density $u:[0,\infty)\times E \longrightarrow \mathbb{R}$, defined on the pseudo-measurable filtered space $\bigl(\Omega, \mathcal{A}, \{\mathcal{A}_t\}_{t\ge0}, Q\bigr)$.
\begin{itemize}
\item[($i$)] Assume $E\subseteq [0,\infty)$. For $t\ge0$, the \textit{moment generating function of $X(t)$} is the $x$-Laplace transform of $u(t, \cdot)$. The \textit{moment generating function of $X$} is the $x$-Laplace transform of $u$.

\item[($ii$)] For $t\ge0$, the \textit{characteristic function of $X(t)$} is the $x$-Fourier transform of $u(t, \cdot)$. The \textit{characteristic function of $X$} is the $x$-Fourier transform of $u$. 
\end{itemize}
Let $X'$ be a pseudo-process with pseudo-density $v:[0,\infty)\times E' \longrightarrow \mathbb{R}$, defined on the filtered space $\bigl(\Omega', \mathcal{A}', \{\mathcal{A}'_t\}_{t\ge0}, Q'\bigr)$.

\begin{itemize}
\item[($iii$)] For $s,t \ge0$, we say that $X(t)$ and $X'(s)$ are \textit{equal in distribution} if $\mathcal{C}_t =\mathcal{C}'_s$ and $Q\bigl(X(t)\in A\bigr) = Q'\bigl(X'(s)\in A\bigr)$ for all $A$; we write $X(t)\stackrel{d}{=}X'(s)$.

\item[($iv$)] $X$ has \textit{stationary increments} if $X(t)-X(s) \stackrel{d}{=}X(t-s)$ for $t\ge s \ge 0$.
\end{itemize}
Let $Q\times Q'$ being a pseudo-measure on the algebra $\alpha\bigl(\mathcal{A}\times\mathcal{A}'\bigr)$. We define the joint pseudo-measurable space $\Bigl(\Omega\times\Omega',\alpha\bigl(\mathcal{A}\times\mathcal{A}'\bigr), \big\{\alpha\bigl(\mathcal{A}_t\times\mathcal{A}'_t\bigr)\big\}_{t\ge0}, Q\times Q'\Bigr)$. 

\begin{itemize}
\item[($v$)] For $s,t\ge0$, we say that $X(t)$ and $X'(s)$ are \textit{independent} if $Q\times Q'(A\times A') = Q(A)Q'(A')$ for all $A\times A'\in\alpha\bigl(\mathcal{C}_t\times\mathcal{C}'_s\bigr)$.

\item[($vi$)] $X$ and $X'$ are \textit{independent} if $Q\times Q'(A\times A') = Q(A)Q'(A')$ for all $A\times A'\in\alpha\bigl(\mathcal{A}\times\mathcal{A}'\bigr)$.

\item[($vii$)] Assume $X,X'$ independent and $X'\ge0$ $Q'$-a.e.. We define the \textit{time-changed} pseudo-process $X\circ X' = \bigl\{X\bigl(X'(t)\bigr)\}_{t\ge0}$ as the process having pseudo-density $u\diamond v$ (see Definition \ref{definizioneComposizioneStocastica}).
\end{itemize}
\end{definition}

We point out that definitions ($i$), ($ii$) and ($vii$) make sense only if the corresponding integrals converge.
\\
In definitions ($iii$) and ($v$) we consider the pseudo-measurable space of the components $X(t)$ and $X'(s)$, having algebras of the type (\ref{algebraPseudoVariabile}). 
\\
It is important to observe that definition ($iii$) can be also expressed in terms of the equality of the characteristic functions (or moment generating functions).
\\
In light of the independence definition ($v$), the pseudo-density of the pseudo-measure $Q\times Q'$ is the product of the pseudo-densities $u(t,\cdot), v(s,\cdot)$ (also known as marginal pseudo-densities) and therefore also the joint characteristic function (or moment generating function) is the product of the marginal ones. 
Similar factorizations hold in definition ($vi$), considering the product of $u$ and $v$ as well as of their characteristic functions (or moment generating functions).
\\

Note that the definitions of independence and time-changed composition, i.e. ($v$)-($vi$)-($vii$), concerning the joint space, can be extended to the case where $X'$ (or $X$) is a conventional stochastic process. Indeed, $\mathcal{A}',\ \mathcal{A}'_t$ and $\mathcal{C}'_t, \ t\ge0,$ would be $\sigma$-algebras on $\Omega'$ with $Q'$ a probability measure. In general, the joint space would still be a pseudo-measurable space with a pseudo-measure $Q\times Q'$ defined on an algebra and whose kernel is expressed as the product of the pseudo-density of $X$ and the probability law of $X'$. In the case of the time-changed process, the pseudo-density would be defined as the stochastic composition (see Definition \ref{definizioneComposizioneStocastica}) of a kernel and a probability density.

Below we deal with both pseudo-processes and canonical stochastic processes and, by assuming independence, we work using the factorization of the moment generating functions.

\subsection{Stable pseudo-subordinators and pseudo-inverses}

Let $\nu>1$. We consider the fractional Cauchy problem
\begin{equation}\label{problemaFrazionarioPseudoSubordinatore}
\begin{cases}
\displaystyle \frac{\partial}{\partial t}u(t, x) = - \frac{\partial^{\nu } }{\partial x^{\nu }}u(t,x),\ \ t,x\ge0,\\[8pt]
u(0,x) = \delta(x),\ \ x\ge0,\\[6pt]
\displaystyle\frac{\partial^{k} u}{\partial x^{k}}\Big|_{x=0} = 0,\ \ t\ge0,\ k=0,\dots,\ceil{\nu } -1,
\end{cases}
\end{equation}
whose solution is denoted by $u_\nu$ (belonging to $L^\infty$) and it has Laplace transform given by
\begin{equation}\label{LaplacePseudoSubordinatore}
\mathcal{L}u_\nu(t,\mu) = e^{-\mu^\nu t},\ \ \ t,\mu\ge0.
\end{equation}
With (\ref{LaplacePseudoSubordinatore}) at hand we readily have that $\int_0^\infty u_\nu(t,x)\dif x = 1\ \forall\ t$ and that $u$ is not a probability density since it takes real values. Indeed (remember $\nu>1$),
$$\int_0^\infty x u_\nu(t,x) \dif x = -\frac{ \partial}{\partial \mu} \mathcal{L}u_\nu(t,\mu)\Big|_{\mu = 0} = \nu t\mu^{\nu-1} e^{-\mu^\nu t}\Big|_{\mu=0} = 0$$
meaning that $u_\nu$ must be sign-varying.

We point out that if $\nu\in(0,1]$ the solution to (\ref{problemaFrazionarioPseudoSubordinatore}) is the probability law of a stable subordinator of order $\nu$; formula (\ref{LaplacePseudoSubordinatore}) is the moment generating function and we can also write $u_\nu(t,x) = \nu t\, l_\nu(x,t) /x $, with $l_\nu$ density of the inverse, see (\ref{leggeInversoSubordinatore}).
\\

By following the procedure presented in Section \ref{sezioneTeoriaGeneralePseudoProcessi}, setting $E=[0,\infty)$, we construct the pseudo-process $S_\nu = \{S_\nu(t)\}_{t\ge0}$ with kernel $u_\nu$, i.e. such that $Q\{S_\nu(t_1)\in I_1,\dots,S_\nu(t_n)\in I_n\} = Q\bigl(C(\underline{I},\underline{t})\bigr)$, suitably adapted from (\ref{definizionePseudoProbabilita}), for $I_1,\dots,I_n\in \mathcal{B}\bigl([0,\infty)\bigr)$ and $0\le t_1<\dots<t_n<\infty$. We call $S_\nu$ \textit{stable pseudo-subordinator of order $\nu>1$}. It starts at $S_\nu(0) = 0$ $Q$-a.e. because of the first initial condition of problem (\ref{problemaFrazionarioPseudoSubordinatore}), and from the Laplace transform (\ref{LaplacePseudoSubordinatore}) (which, according to ($i$) of Definition \ref{definizioniPsudoProcessi}, is its moment generating function) we derive that it has independent and stationary increments since we can decompose $e^{-\mu^\nu t} = e^{-\mu^\nu (t-s)}e^{-\mu^\nu s}$.
The stable pseudo-subordinator assumes non-negative values $Q$-a.e. because of the definition of the pseudo-measure (\ref{definizionePseudoProbabilita}) and since $u_\nu(t,x)$ is defined only for $x\ge0$ (we may extend it to $\mathbb{R}$ by setting $u(t,x) = 0$ for $x<0, t\ge0$). 
\\Hence, $S_\nu$ is a non-negative and non-decreasing pseudo-process with moment generating function (\ref{LaplacePseudoSubordinatore}). These reasons justify the name ``subordinator'' in the definition.

\begin{remark}[Properties of stable pseudo-subordinators]
As for Brownian motion and other pseudo-processes (see for instance \cite{H1978}), thanks to formula (\ref{LaplacePseudoSubordinatore}), it is straightforward to obtain the following properties of stable pseudo-subordinators. For $\nu>0$ (i.e. including also the probabilistic case) and $t\ge0$,
\begin{itemize}
\item[($i$)] the $t$-Laplace transform of $u_\nu$ can be derived as follows, with $0\le \delta<\mu^\nu$:
\begin{align*} 
\int_0^\infty e^{-\delta t} \int_0^\infty e^{-\mu x}u_\nu(t,x)&\dif x\dif t=\int_0^\infty e^{-\delta t} e^{-\mu^\nu t}\dif t  = \frac{1}{\mu^\nu+\delta} \nonumber\\&\implies \int_0^\infty e^{-\delta t} u_\nu(t,x)\dif t = x^{\nu-1}E_{\nu,\nu}(-\delta x^\nu),\ \ x\ge0,
\end{align*}
where $E_{\alpha,\beta}(z)  =\sum_{k=0}^\infty z^k/\Gamma(\alpha k +\beta)$, with $\alpha,\beta,z\in\mathbb{C}$ and $\Re(\alpha)>0$, is the Mittag-Leffler function;
\item[($ii$)] $S_\nu(t)\stackrel{d}{=}t^{2/\nu} S_\nu(1/t) \stackrel{d}{=} c^{1/\nu} S_\nu(t/c)$, with $c>0$;
\item[($iii$)] let $S_\nu^{(1)},S_\nu^{(2)}\dots$ be independent pseudo-subordinators and $\{a_n\}_{n\in\mathbb{N}}$ be a non-negative real sequence such that $\sum_{k=1}^n a_k^\nu/n \longrightarrow a<\infty$, then 
\begin{equation}\label{vesioneLimiteCentrale}
\frac{1}{n^{1/\nu}}\sum_{k=1}^n a_k S_\nu^{(k)}(t) \stackrel{d}{\longrightarrow} a^{1/\nu} S_\nu(t) \stackrel{d}{=}S_\nu(at).
\end{equation}
In fact, thanks to the independence assumption and that from ($ii$) $c S_\nu(t) = S_\nu(c^\nu t)$, with $c>0$, the $x$-Laplace transform of the pseudo-density of the first member of (\ref{vesioneLimiteCentrale}) can be written as,
$$ \prod_{k=1}^n \int_0^\infty e^{-\mu x} u_{\nu}\Bigl(\frac{a_k^\nu t}{n}, x\Bigr)\dif t = \prod_{k=1}^n e^{-\mu^\nu a_k^\nu t/n} \xrightarrow[n\rightarrow \infty]{} e^{-\mu^\nu a t }.$$
\item[($iv$)] with $\nu_1,\nu_2>0$, $S_{\nu_1}	\bigl(S_{\nu_2}(t)\bigr) \stackrel{d}{=} S_{\nu_1\nu_2}(t)$.
\end{itemize}
It is well-known that these properties hold in the case of $\nu\in(0,1)$. The latter property can be linked to the idea of stable pseudo-random variables (and processes).  
\hfill$\diamond$
\end{remark}

In analogy to the derivation of probabilistic subordinators, we derive the representation of a pseudo-subordinator as limit of a compound Poisson with ingredients which include genuine random variables with power-law distribution and suitable sign-varying terms (which are the pseudo-subordinators themselves at time $t=1$).

\begin{proposition}\label{proposizioneLimitePoissonCompostoPseudoSub}
Let $S_\nu^{(1)},S_\nu^{(2)},\dots$ be independent stable pseudo-subordinators of order $\nu>1$ and $N$ be an independent homogeneous Poisson process of rate $\lambda >0$. Then
\begin{equation}\label{limitePoissonCompostoPseudoSubBeta}
\lim_{\delta\downarrow0} \sum_{k=1}^{N(t\,\delta^{-\beta})} X_k^{\beta,\delta}S_\nu^{(k)}(1) \stackrel{d}{=} S_\beta\bigl(\lambda t \,\Gamma(1-\beta/\nu)\bigr),
\end{equation}
where $0<\beta<\nu$ and $X_1^{\beta,\delta},X_2^{\beta,\delta},\dots$ are independent copies of the random variable $X^{\beta, \delta}$ such that
\begin{equation}\label{definizioneVariabiileSopravvivenzaPotenza}
P\{X^{\beta,\delta}>x\} =
\begin{cases}
\begin{array}{l l}
1, & x<\delta,\\
\Bigr(\frac{\delta}{x}\Bigr)^\beta, & x\ge\delta.
\end{array}
\end{cases}
\end{equation}
Furthermore, we can write
\begin{equation}\label{limitePoissonCompostoPseudoSub}
\lim_{\delta\downarrow0}  \sum_{k=1}^{N(t\,\delta^{-\nu})} \delta S_\nu^{(k)}(1) \stackrel{d}{=}  S_\nu(\lambda t).
\end{equation}
\end{proposition}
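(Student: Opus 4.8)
The plan is to carry out the whole argument at the level of moment generating functions, i.e. $x$-Laplace transforms of the (pseudo-)kernels: by part ($iii$) of Definition \ref{definizioniPsudoProcessi}, and in accordance with the meaning of $\stackrel{d}{\longrightarrow}$ used in item ($iii$) of the Remark above, it suffices to identify the pointwise limit, as $\delta\downarrow0$, of the moment generating function of the compound sum on the left-hand side. Since $u_\nu(t,\cdot)$ is bounded, each such transform $\int_0^\infty e^{-\mu x}u_\nu(t,x)\dif x$ converges absolutely for $\mu>0$, so Fubini may be applied freely; the mixed genuine/pseudo nature of the summands — genuine power-law weights $X_k^{\beta,\delta}$ and a genuine Poisson counter $N$, multiplied against the pseudo-variables $S_\nu^{(k)}(1)$ — is handled by the factorization of kernels and transforms discussed at the end of Section \ref{sezioneTeoriaGeneralePseudoProcessi}.

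First I would record, from (\ref{LaplacePseudoSubordinatore}), that the moment generating function of $S_\nu(1)$ is $\mu\mapsto e^{-\mu^\nu}$. Using the scaling property ($ii$) of the Remark in the form $a\,S_\nu(1)\stackrel{d}{=}S_\nu(a^\nu)$ for $a\ge0$, together with the independence of $X^{\beta,\delta}$ from the $S_\nu^{(k)}$, conditioning on $X^{\beta,\delta}$ shows that a single summand $X_k^{\beta,\delta}S_\nu^{(k)}(1)$ has moment generating function
\[
\psi_\delta(\mu)=\int_{[\delta,\infty)} e^{-\mu^\nu a^\nu}\,\dif P_{X^{\beta,\delta}}(a)=\int_0^\infty e^{-\mu^\nu y}\,\dif P_{Y_\delta}(y),\qquad Y_\delta:=(X^{\beta,\delta})^\nu.
\]
Since the summands are i.i.d. and $N$ is an independent Poisson process of rate $\lambda$, the compound-Poisson transform formula gives that the left-hand side of (\ref{limitePoissonCompostoPseudoSubBeta}) has moment generating function $\exp\bigl\{\lambda t\,\delta^{-\beta}\bigl(\psi_\delta(\mu)-1\bigr)\bigr\}$, so everything reduces to the asymptotics of $\delta^{-\beta}\bigl(\psi_\delta(\mu)-1\bigr)$.

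The core computation is then as follows. From (\ref{definizioneVariabiileSopravvivenzaPotenza}), $P\{Y_\delta>y\}=1$ for $y<\delta^{\nu}$ and $P\{Y_\delta>y\}=\delta^{\beta}y^{-\beta/\nu}$ for $y\ge\delta^{\nu}$. By the tail representation $\psi_\delta(\mu)-1=-\mu^\nu\int_0^\infty e^{-\mu^\nu y}P\{Y_\delta>y\}\dif y$, the contribution over $(0,\delta^\nu)$ is bounded in absolute value by $\mu^\nu\delta^\nu$ and hence becomes negligible after multiplication by $\delta^{-\beta}$, because $\beta<\nu$, while the remaining term equals $-\mu^\nu\delta^{\beta}\int_{\delta^\nu}^\infty e^{-\mu^\nu y}y^{-\beta/\nu}\dif y$; multiplying by $\delta^{-\beta}$ and letting $\delta\downarrow0$ (monotone convergence, the exponent $\beta/\nu<1$ guaranteeing integrability at the origin) yields, after the substitution $w=\mu^\nu y$,
\[
\lim_{\delta\downarrow0}\delta^{-\beta}\bigl(\psi_\delta(\mu)-1\bigr)=-\mu^\nu\int_0^\infty e^{-\mu^\nu y}y^{-\beta/\nu}\dif y=-\mu^\beta\,\Gamma\!\bigl(1-\beta/\nu\bigr).
\]
Hence the moment generating function of the compound sum converges to $e^{-\lambda t\,\mu^\beta\Gamma(1-\beta/\nu)}$, which by (\ref{LaplacePseudoSubordinatore}) is exactly the moment generating function of $S_\beta\bigl(\lambda t\,\Gamma(1-\beta/\nu)\bigr)$, proving (\ref{limitePoissonCompostoPseudoSubBeta}).

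Formula (\ref{limitePoissonCompostoPseudoSub}) follows by the same scheme with the deterministic weight $\delta$ replacing $X^{\beta,\delta}$: a single summand $\delta S_\nu^{(k)}(1)$ has moment generating function $e^{-\mu^\nu\delta^\nu}$, the compound-Poisson transform is $\exp\{\lambda t\,\delta^{-\nu}(e^{-\mu^\nu\delta^\nu}-1)\}$, and $\delta^{-\nu}(e^{-\mu^\nu\delta^\nu}-1)\to-\mu^\nu$ as $\delta\downarrow0$, so the limit is $e^{-\lambda t\,\mu^\nu}$, the moment generating function of $S_\nu(\lambda t)$. I expect the only delicate points to be the bookkeeping for the mixed genuine/pseudo products — licensed by the constructions in Section \ref{sezioneTeoriaGeneralePseudoProcessi} and made harmless by the absolute convergence of the transforms noted above — and the interchange of limit and integral in the asymptotic step; the asymptotic evaluation of $\delta^{-\beta}(\psi_\delta-1)$ is really the heart of the matter.
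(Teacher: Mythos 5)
Your proposal is correct and follows essentially the same route as the paper: compute the $x$-Laplace transform (moment generating function) of the compound Poisson sum via independence and factorization, then evaluate the $\delta\downarrow 0$ asymptotics of the exponent to obtain $e^{-\lambda t\,\mu^\beta\Gamma(1-\beta/\nu)}$ and $e^{-\lambda t\,\mu^\nu}$, identifying the limits through (\ref{LaplacePseudoSubordinatore}). The only cosmetic differences are that you use the survival-function representation of $\psi_\delta(\mu)-1$ where the paper integrates by parts in the $x$-variable, and you treat the deterministic-weight case (\ref{limitePoissonCompostoPseudoSub}) directly instead of reading it off, as the paper does, from a single computation with general scaling exponent $\alpha$ (plus the remark $X^{\beta,\delta}\to\delta$ as $\beta\to\infty$).
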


The interested reader can find some further results of the type of those in Proposition \ref{proposizioneLimitePoissonCompostoPseudoSub} in the paper of Orsingher and Toaldo \cite{OT2014} (concerning other classes of pseudo-processes).

\begin{proof}
We study the pseudo-random object
\begin{equation}\label{sommaPoissonCompostoPseudoProcessoGenerale}
\sum_{k=1}^{N(t\,\delta^{-\alpha})} X_k^{\beta,\delta}S_\nu^{(k)}(1),\ \  \text{for }\ \alpha,\beta,\nu,\delta>0. 
\end{equation}
Let us denote by $f(x) = \beta \delta^\beta/x^{\beta+1}\mathds{1}_{[\delta,\infty)}(x),\ x\in\mathbb{R},$ the density of the random variable in (\ref{definizioneVariabiileSopravvivenzaPotenza}). Note that all the subordinators have the same pseudo-distribution $u(1,\cdot)$ (omitting the subscript $\nu$) and keep in mind the independence among all the elements appearing in (\ref{sommaPoissonCompostoPseudoProcessoGenerale}). Now, the Laplace transform of the pseudo-density of (\ref{sommaPoissonCompostoPseudoProcessoGenerale}) writes as
\begin{align}
\sum_{n=0}^\infty P\{N(t\delta^{-\alpha}) = n\} &\int_\delta^\infty f(x_1)\dif x_1\cdots\int_\delta^\infty f(x_n)\dif x_n \int_0^\infty u(1, y_1)\dif y_1 \cdots\int_0^\infty u(1, y_n)\dif y_n e^{-\mu\sum_{k=1}^n x_ky_k}\nonumber\\
& = e^{-\lambda t\delta^{-\alpha}}\sum_{n=0}^\infty  \frac{\Bigl(\lambda t\delta^{-\alpha}\Bigr)^n}{n!}\prod_{k=1}^n \Biggl( \int_\delta^\infty f(x_k) \dif x_k \int_0^\infty e^{-\mu x_k y_k} u(1,y_k)\dif y_k  \Biggr) \nonumber\\
& = \exp\Bigg\{ -\lambda t\delta^{-\alpha} \Biggl(1- \int_\delta^\infty f(x) \dif x \int_0^\infty e^{-\mu x y} u(1,y)\dif y \Biggr) \Bigg\} \nonumber\\
& = \exp\Bigg\{ -\lambda t\delta^{-\alpha} \Biggl(1- \int_\delta^\infty f(x) e^{-(\mu x)^\nu}\dif x\Biggr)  \Bigg\}\nonumber\\
& = \exp\Bigg\{ -\lambda t\delta^{-\alpha} \int_\delta^\infty \Bigl(1- e^{-(\mu x)^\nu} \Bigr)f(x)\dif x  \Bigg\}\nonumber\\
&  = \exp\Bigg\{ -\lambda t\beta\delta^{\beta-\alpha} \int_\delta^\infty \frac{1- e^{-\mu^\nu x^\nu} }{x^{\beta+1}}\dif x  \Bigg\}\nonumber\\ 
& = \exp\Bigg\{ -\lambda t\delta^{-\alpha}\Bigl(1- e^{-\mu^\nu \delta^\nu}\Bigr)-\lambda t\nu\mu^\nu \delta^{\beta-\alpha}  \int_\delta^\infty e^{-\mu^\nu x^\nu} x^{\nu-\beta-1}\dif x  \Bigg\}\nonumber\\
& \stackrel{\delta\downarrow 0}{\longrightarrow}\begin{cases}
\begin{array}{l l}
1,& \alpha<\beta,\nu,\\
e^{-\lambda t\mu^\beta \Gamma(1-\beta/\nu)}, & \alpha = \beta<\nu,\\
e^{-\lambda t \mu^\nu}, &\alpha = \nu<\beta,\\
0, & \beta\le\alpha\le\nu \text{ or } \alpha>\nu.
\end{array}
\end{cases}\label{dimostrazionePoissonCompostoSubordinatore}
\end{align}
We observe that in the second last step we used integration by parts.
\\
Finally, the second and third cases of (\ref{dimostrazionePoissonCompostoSubordinatore}) respectively yields (\ref{limitePoissonCompostoPseudoSubBeta}) and (\ref{limitePoissonCompostoPseudoSub}); note that $X^{\beta,\delta}\stackrel{d}{\longrightarrow} X^\delta = \delta$ a.s. for $\beta\longrightarrow\infty$.
\end{proof}

We point out that the formulas in Proposition \ref{proposizioneLimitePoissonCompostoPseudoSub} prove that $S_\nu$ has non-decreasing piecewise constant trajectories.
\\

Since $S_\nu$ is a non decreasing pseudo-process taking values in $[0,\infty)$, following the line of the classical theory of probabilistic subordinators, we define the \textit{pseudo-inverse} (of the stable pseudo-subordinator of order $\nu>1$) $L_\nu = \{L_\nu(t)\}_{t\ge0}$ such that $L_\nu(t) = \inf\{x\ge0\,:\,S_\nu(x)\ge t\}$. By working as for classical subordinators and inverses, we obtain that, denoting with $l_\nu:[0,\infty)\times[0,\infty)\longrightarrow \mathbb{R}$ the pseudo-density of $L_\nu$, then, with $\mu,t, x\ge0$,
\begin{equation}\label{relazioniPseudoSunordinatoreEdInverso}
\int_0^x l_\nu(t,y) \dif y = \int_t^\infty u_\nu(x,y) \dif y \ \text{ and }\  \int_0^\infty e^{-\mu t}l_\nu(t,x)\dif t = - \int_0^\infty e^{-\mu t} \dif t \frac{\partial}{\partial x} \int_0^t u_\nu(x,s)\dif s, 
\end{equation}
which lead to
\begin{equation}\label{LaplaceInversoPseudoSubordinatore}
\int_0^\infty e^{-\mu t} l_\nu(t,x) \dif t = \mu^{\nu-1}e^{-\mu^\nu x},\ \ \ \mu,x\ge0.
\end{equation}
Note that when we use $l_\nu(t,x)$, the first argument represents the ``time'' for $L_\nu$, meaning the ``space'' for $S_\nu$ and the other way round for the second argument. Thus, formula (\ref{LaplaceInversoPseudoSubordinatore}) is not the moment generating function of $L_\nu(t)$. However, we can derive this function by considering the $(x,t)$-Laplace transforms and ``inverting'', that is, with $\delta\ge0$,
\begin{align*}
\int_0^\infty e^{-\delta x}\mu^{\nu-1}e^{-\mu^\nu x}& \dif x = \frac{\mu^{\nu-1}}{\mu^\nu+\delta}, \ \ \mu\ge0\implies \int_0^\infty e^{-\delta x }l_\nu(t,x) \dif x = E_{\nu,1} (-\delta t^\nu),\ \ t\ge0. 
\end{align*}

In view of formula (\ref{LaplaceInversoPseudoSubordinatore}), we have that the pseudo-density of $L_\nu$ is the solution to the Cauchy problem (\ref{problemaTempoProbabilita}).

Hence, the task of solving problem (\ref{problemaSpazioTempoProbabilita}) reduces to study the associated \textit{space} problem
\begin{equation}\label{problemaSpazioPseudoProbabilita}
\begin{cases}
\displaystyle \frac{\partial }{\partial t}u(t, x) =  O_x u(t,x),\ \ t\ge0,\ x\in \mathbb{R}^d,\\[7pt]
\displaystyle u(0,x) = \delta(x),\ \ x\in\mathbb{R}^d,
\end{cases}
\end{equation}
In light of Theorem \ref{teoremaComposizioneStocasticaSoluzioniSpazioTempo}, the solution to problem (\ref{problemaSpazioTempoProbabilita}) is given by the stochastic composition of the solution to (\ref{problemaSpazioPseudoProbabilita}) and the kernel of a pseudo-inverse, $l_\nu$.
\\

In the case that problem (\ref{problemaSpazioPseudoProbabilita}) admits a pseudo-probabilistic solution, we have the following statement.

\begin{theorem}\label{teoremaSubordinazione}
Let $L_\nu$ be the pseudo-inverse of order $\nu>0$ and $X$ be an independent pseudo-process whose density satisfies the space problem (\ref{problemaSpazioPseudoProbabilita}), then the density of the pseudo-process $\big\{X\bigl(L_\nu(t)\bigr)\big\}_{t\ge0}$ is solution to (\ref{problemaSpazioTempoProbabilita}).
\end{theorem}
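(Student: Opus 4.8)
The plan is to obtain the statement by assembling three ingredients already established above: the description of a time-changed pseudo-process via stochastic composition (point ($vii$) of Definition \ref{definizioniPsudoProcessi}), the identification of the kernel $l_\nu$ of $L_\nu$ as the solution of the relevant time problem (recorded right after (\ref{LaplaceInversoPseudoSubordinatore})), and Theorem \ref{teoremaComposizioneStocasticaSoluzioniSpazioTempo}. No new computation is really needed: the theorem is the probabilistic packaging of the comparison between (\ref{trasformataSoluzioneComposizione}) and (\ref{soluzioneProblemaSpazioTempo}) already carried out in Section 2.

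First I would write $u_1$ for the pseudo-density of $X$ and recall that, since $X$ and $L_\nu$ are independent and $L_\nu\ge0$ $Q$-a.e., point ($vii$) of Definition \ref{definizioniPsudoProcessi} says that $\{X(L_\nu(t))\}_{t\ge0}$ has pseudo-density $u_1\diamond l_\nu(t,x)=\int_0^\infty u_1(s,x)\,l_\nu(t,s)\dif s$. Then I would record the two transforms I need. By hypothesis $u_1$ solves the space problem (\ref{problemaSpazioPseudoProbabilita}), i.e.\ (\ref{problemaSpazio}) with $g=\delta$, so by (\ref{soluzioneSpazio}) its $x$-Fourier transform is $\mathcal{F}u_1(s,\gamma)=e^{sF(\gamma)}$. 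On the other hand, as observed after (\ref{LaplaceInversoPseudoSubordinatore}), the kernel $l_\nu$ solves (\ref{problemaTempoProbabilita}), which is (\ref{problemaTempo}) with $a_k\equiv0$ for all $k$ and boundary datum $h=\mathcal{L}^{-1}(\mu^{\nu-1})$, so by (\ref{soluzioneTempo}) its $t$-Laplace transform is $\mathcal{L}l_\nu(\mu,s)=\mu^{\nu-1}e^{-\mu^\nu s}$.

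Next I would observe that these are exactly the initial and boundary data singled out in point ($a$) preceding Theorem \ref{teoremaComposizioneStocasticaSoluzioniSpazioTempo}, taken with $j=0$ and $f_0=\delta$: for that choice the space-time problem (\ref{problemaSpazioTempo}) is precisely problem (\ref{problemaSpazioTempoProbabilita}). Hence Theorem \ref{teoremaComposizioneStocasticaSoluzioniSpazioTempo} applies directly and gives that $u_1\diamond l_\nu$ is the solution of (\ref{problemaSpazioTempoProbabilita}); equivalently, one may substitute the two transforms above into (\ref{trasformataSoluzioneComposizione}) and read off that the result coincides with (\ref{soluzioneProblemaSpazioTempo}) for $\mathcal{F}f_0\equiv1$ and $f_k=0$, $k\ge1$. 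Combined with the first step, this shows that the pseudo-density of $\{X(L_\nu(t))\}_{t\ge0}$ solves (\ref{problemaSpazioTempoProbabilita}), which is the assertion.

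The point that I expect to require genuine care — and which I regard as the main obstacle — is making sure all the objects are well defined: that the integral defining $u_1\diamond l_\nu$ converges (so that, per the remark after Definition \ref{definizioniPsudoProcessi}, the time-changed pseudo-process makes sense) and that it is again a bona fide kernel. For this I would invoke the remark following Definition \ref{definizioneComposizioneStocastica}: $u_1\in L^\infty$ because of (\ref{ipotesiAsintoticoDerivate}) (which holds at least for $k=0$), and $\int_0^\infty l_\nu(t,s)\dif s$ is controlled uniformly in $t$ through the first identity in (\ref{relazioniPseudoSunordinatoreEdInverso}), whence $u_1\diamond l_\nu\in L^\infty$; the mass of $u_1\diamond l_\nu(t,\cdot)$ then reduces, by Fubini, to that of $u_1(s,\cdot)$ weighted against $l_\nu(t,\cdot)$. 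Once these integrability facts are in place, everything else is the transform-level bookkeeping already done in Section 2, so I would spend most of the write-up on the convergence and the exchange-of-integrals justification rather than on the structural identification.
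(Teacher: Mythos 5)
Your argument is exactly the paper's: the statement is proved there by the discussion immediately preceding it, namely identifying $l_\nu$ (via (\ref{LaplaceInversoPseudoSubordinatore})) as the solution of the time problem (\ref{problemaTempoProbabilita}) arising from case ($a$) with $j=0$, $f_0=\delta$, and then invoking Theorem \ref{teoremaComposizioneStocasticaSoluzioniSpazioTempo} together with point ($vii$) of Definition \ref{definizioniPsudoProcessi} to read the stochastic composition $u_1\diamond l_\nu$ as the pseudo-density of $X\bigl(L_\nu(t)\bigr)$. Your additional remarks on convergence of the composition are a reasonable supplement but do not change the route.
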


Note that in Theorem \ref{teoremaSubordinazione} we also include the genuine stochastic processes (as a particular case of pseudo-processes).

\begin{example}[Fractional Laplacian operator]
Inspired by the paper of Orsingher and Toaldo \cite{OT2017}, we consider problem (\ref{problemaSpazioTempoProbabilita}) with the following differential equation, for $\beta_1,\dots,\beta_N\in(0,1]$ and $\lambda_1,\dots,\lambda_N>0$,
\begin{equation}\label{equazioneLaplacianoFrazionario}
\frac{\partial^\nu}{\partial t^\nu}u(t,x) =-\sum_{i=1}^N
 \lambda_i\bigl(- \Delta \bigr)^{\beta_i} u(t,x),\ \ \ t\ge0, x\in\mathbb{R}^d,
 \end{equation}
where $\bigl(-\Delta\bigr)^{\alpha}$ denotes the fractional Laplacian operator of order $0<\alpha\le2$, defined as
\begin{equation}\label{definizioneLaplacianoFrazionario}
 -\bigl(-\Delta\bigr)^{\alpha} u(x) =\frac{1}{(2\pi)^d} \int_{\mathbb{R}^d}e^{-i\gamma\cdot x}||\gamma||^{2\alpha} \mathcal{F}u(\gamma)\dif \gamma.
\end{equation}
 For further information about this operator we refer to \cite{K2017}.

In light of Theorem \ref{teoremaSubordinazione} we can interpret the solution to equation (\ref{equazioneLaplacianoFrazionario}), with the initial conditions in problem (\ref{problemaSpazioTempoProbabilita}), as the pseudo-density of the process $ \sum_{i=1}^N \lambda_i^{1/(2\beta_i)} X_{2\beta_i}\bigl(L_\nu(t)\bigr),$ $t\ge0$, where $L_\nu$ is the pseudo-inverse of order $\nu>0$ and $X_{2\beta_i}$ are independent isotropic $d$-dimensional stable processes of order $2\beta_i$, i.e. having characteristic function equal to
\begin{equation}\label{funzioneCarattereisticaProcessoStabileIsotropico}
\mathbb{E} e^{i\gamma\cdot X_{2\beta_i}(t)} = e^{-t||\gamma||^{2\beta_i}},\ \ \ t\ge0, \gamma \in\mathbb{R}^d.
\end{equation} 
Indeed, thanks to (\ref{funzioneCarattereisticaProcessoStabileIsotropico}) we readily obtain the characteristic function of the process $\sum_{i=1}^N \lambda_i^{1/(2\beta_i)} X_{2\beta_i}(t)$, $t\ge0$ (meaning the $x$-Fourier transform of its transition density),
$$\mathbb{E} e^{i \sum_{j=1}^N \lambda_j^{1/(2\beta_j)} \gamma\cdot X_{2\beta_j}(t)} =  e^{- t \sum_{i=1}^N \lambda_i ||\gamma||^{2\beta_i}}$$
and it is easy to prove that it satisfies the space problem (\ref{problemaSpazioPseudoProbabilita}) with differential equation given by
$$\frac{\partial}{\partial t}u(t,x) =- \sum_{i=1}^N
 \lambda_i \bigl(- \Delta \bigr)^{\beta_i} u(t,x),\ \ \ t\ge0, x\in\mathbb{R^d}.$$

We point out that if $d = 1,\ N=1, \beta_1 = 1$ and $\lambda_1 = \lambda$, we obtain the scaled Brownian motion.\hfill$\diamond$
\end{example}

\section{Generalization of the main results}

Fix $N\in \mathbb{N}$. In this section, for $\nu>0$, we consider more complex differential equations and the related Cauchy problems, involving a general time-differential operator $O^\nu_t$ such that there exist a function $L^\nu:[0,\infty)\longrightarrow[0,\infty)$, the functions $L^\nu_{k}:[0,\infty)\longrightarrow[0,\infty)$ 
 and the operators $\tilde{O}^\nu_{t,k}$, for some specific indexes $k\in J_{\nu}\subset\mathbb{N}_0$, and $t_0\ge0$ such that
\begin{equation}\label{trasformataLaplaceOperatoreTempoGenerale}
 \mathcal{L}\bigl(O^\nu_t u\bigr)(\mu,x) = L^\nu(\mu)  \mathcal{L}u(\mu,x) - \sum_{k\in J_\nu}L^\nu_{k}(\mu) \tilde{O}^\nu_{t,k}u(t_0,x),\ \ \ \mu\ge0,x\in \mathbb{R}^d.
\end{equation}
Note that the quantity $\tilde{O}^\nu_{t,k}u(t_0,x)$ is a function of $x$ and it is connected to the initial conditions of the Cauchy problem (studied at time $t_0$). 

\begin{example}
To clarify the form of these operators, we note that, for example, in the case of the Dzherbashyan-Caputo fractional operator, (usually) $t_0=0$ and the set of indexes is $J_\nu = \{0,\dots,\ceil{\nu}-1\}$ concerning the order of the derivatives involved in the calculation of the Laplace transform (see (\ref{trasformataLaplaceDerivataFrazionariaIntroduzione})), represented by the operators $\tilde{O}^\nu_{t,k} = \partial^k/\partial t^k$. Furthermore, we have the functions $L^\nu(\mu) = \mu^\nu$ and $L^\nu_{k} (\mu) = \mu^{\nu-k-1},\ k\in J_\nu$.

In the case of Riemann-Liouville fractional differential operator, (usually) $t_0=0$, $J_\nu = \{1,\dots,\ceil{\nu}\}$, $\tilde{O}^\nu_{t,k}$ is the Riemann-Liouville fractional derivative of order $\nu-k$  and $L^\nu (\mu)=\mu^\nu$, $L^\nu_k (\mu)=\mu^{k-1}$. \hfill$\diamond$
\end{example}

Let $\nu_1,\dots,\nu_N>0$ and remind the space-operator in (\ref{ipotesiOperatoreF}). We are interested in the problem
\begin{equation}\label{problemaSpazioTempoGeneralizzato}
\begin{cases}
\sum_{i=1}^N O^{\nu_i}_tu(t, x) =  O_x u(t,x),\ \ t\ge0,\ x\in \mathbb{R}^d,\\[10pt]
\displaystyle \tilde{O}^{\nu_i}_{t,k}u(t_0,x) = f_{i,k}(x),\ \ x\in \mathbb{R}^d,\ i=1,\dots,N,\ k\in J_{\nu_i},
\end{cases}
\end{equation}
Obviously, in the case that for some $i,k$ and $j,h$ happens that $\tilde{O}^{\nu_i}_{t,k}u(t_0,x) =  \tilde{O}^{\nu_j}_{t,h}u(t_0,x)$, then also the functions $f_{i,k}$ and  $f_{j,h}$ must coincide (meaning that the condition is simply repeated; this is the case when we consider the Dzherbashyan-Caputo derivative). 

By proceeding as shown for problem (\ref{problemaSpazioTempo}), by using the $t$-Laplace transform (remember expression (\ref{trasformataLaplaceOperatoreTempoGenerale})) and the $x$-Fourier transform (remember hypothesis (\ref{ipotesiOperatoreF})), we have that the solution to (\ref{problemaSpazioTempoGeneralizzato}) is such that
\begin{equation}\label{soluzioneProblemaGeneralizzato}
\mathcal{L}\mathcal{F} u(\mu,\gamma) = \frac{\sum_{i=1}^N \sum_{k\in J_{\nu_i}}L^{\nu_i}_{k}(\mu)\,\mathcal{F}f_{i,k}(\gamma)}{\sum_{i=1}^N L^{\nu_i}(\mu) - F(\gamma)},\ \ \ \mu\ge0,\gamma\in\mathbb{R}^d.
\end{equation}

Now, we consider the time problem
\begin{equation}\label{problemaTempoGeneralizzato}
\begin{cases}
\sum_{i=1}^N O^{\nu_i}_tu(t, x) = -\frac{\partial}{\partial x} u(t,x),\ \ t,x\ge0,\\[7pt]
\displaystyle u(t,0) = h(t), \ \ t\ge0,\\[5pt]
\displaystyle \tilde{O}^{\nu_i}_{t,k}u(t_0,x) = a_{i,k}(x),\ \ x\ge0,\ i=1,\dots,N,\ k\in J_{\nu_i}.
\end{cases}
\end{equation}
The $t$-Laplace solution reads (we proceed similarly to the proof of problem (\ref{problemaTempo})),
\begin{equation}\label{soluzioneProblemaTempoGenerale}
\mathcal{L}u(\mu,x) = e^{-\sum_{i=1}^N L^{\nu_i}(\mu) x} \Bigl(\mathcal{L}h(\mu) +\sum_{i=1}^N \sum_{k\in J_{\nu_i}} L^{\nu_i}_{k}(\mu)\int_0^x e^{\sum_{i=1}^N L^{\nu_i}(\mu) y} a_{i,k}(y)\dif y  \Bigr),\ \ \mu,x\ge0.
\end{equation}

Let us denote by $u_2$ the solution to the time problem (\ref{problemaTempoGeneralizzato}). By recalling formula (\ref{soluzioneSpazio}) concerning the space problem (\ref{problemaSpazio}) and denoting by $u_1$ its solution, we obtain that the stochastic composition of $u_1$ and $u_2$ is such that, for $\mu\ge0,\gamma\in\mathbb{R}^d$,
\begin{equation}\label{trasformataSoluzioneComposizioneGeneralizzata}
\mathcal{L}\mathcal{F} u_1\diamond u_2 (\mu,\gamma) = \frac{ \mathcal{F}g(\gamma)}{\sum_{i=1}^N L^{\nu_i}(\mu) -F(\gamma)}\biggl(\mathcal{L}h(\mu) + \sum_{i=1}^N \sum_{k\in J_{\nu_i}} L^{\nu_i}_{k}(\mu) \int_0^\infty e^{F(\gamma) y} a_{i,k}(y)\dif y \biggr). 
\end{equation}

Finally, the next statement is easily verified by keeping in mind formulas (\ref{soluzioneProblemaGeneralizzato}) and (\ref{trasformataSoluzioneComposizioneGeneralizzata}).

\begin{theorem}\label{teoremaGeneraleComposizioneStocastica}
Let us assume the hypotheses (\ref{ipotesiOperatoreF}) and (\ref{trasformataLaplaceOperatoreTempoGenerale}). Then, the solution to the differential Cauchy problem (\ref{problemaSpazioTempoGeneralizzato}) can be expressed as the stochastic composition of the solution to (the space) problem (\ref{problemaSpazio}) with the solution to (the time) problem (\ref{problemaTempoGeneralizzato}), with the initial conditions
\begin{equation}\label{condizioniInizialiGeneraliTeorema} 
g = \delta, \ \ h=0,\ \  \int_0^\infty e^{F(\gamma)y}a_{i,k}(y)\dif y = \mathcal{F}f_{i,k}(\gamma),\ \ \gamma\in\mathbb{R}^d,\ \forall\ i=1,\dots,N,\ k\in J_{\nu_i}.
\end{equation}
\end{theorem}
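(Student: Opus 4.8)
The plan is to verify the theorem by matching $t$-Laplace--$x$-Fourier transforms, exactly as was done for the single-derivative case preceding Theorem~\ref{teoremaComposizioneStocasticaSoluzioniSpazioTempo}. The solution $u$ to problem (\ref{problemaSpazioTempoGeneralizzato}) is characterized by (\ref{soluzioneProblemaGeneralizzato}), so the claim reduces to showing that the stochastic composition $u_1\diamond u_2$ of the solutions of the space problem (\ref{problemaSpazio}) and the time problem (\ref{problemaTempoGeneralizzato}), under the initial conditions (\ref{condizioniInizialiGeneraliTeorema}), has the same $t$-Laplace--$x$-Fourier transform. Since this double transform is injective on the class of bounded, suitably integrable functions we work with (cf.\ the remarks after Definition~\ref{definizioneComposizioneStocastica}), the coincidence of transforms yields the coincidence of the functions.

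First I would record the two ingredients: from (\ref{soluzioneSpazio}), $\mathcal{F}u_1(t,\gamma)=\mathcal{F}g(\gamma)e^{tF(\gamma)}$, and from (\ref{soluzioneProblemaTempoGenerale}), $\mathcal{L}u_2(\mu,x)$ is the corresponding exponential-times-bracket expression. Then I would compute
$$\mathcal{L}\mathcal{F}(u_1\diamond u_2)(\mu,\gamma)=\int_0^\infty e^{-\mu t}\dif t\int_{\mathbb{R}^d}e^{i\gamma\cdot x}\dif x\int_0^\infty u_1(s,x)u_2(t,s)\dif s,$$
interchanging the order of integration so that the $x$-integral acts only on $u_1$, producing $\mathcal{F}g(\gamma)e^{sF(\gamma)}$, and the $t$-integral only on $u_2$, producing $\mathcal{L}u_2(\mu,s)$; this leaves the single integral $\mathcal{F}g(\gamma)\int_0^\infty e^{sF(\gamma)}\mathcal{L}u_2(\mu,s)\dif s$. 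Carrying out this $s$-integral — splitting off the $\mathcal{L}h(\mu)$ term and, for each pair $(i,k)$ with $k\in J_{\nu_i}$, swapping the order of the $s$- and $y$-integrations in the remaining double integral exactly as in the chain of equalities leading to (\ref{trasformataSoluzioneComposizione}) — produces (\ref{trasformataSoluzioneComposizioneGeneralizzata}).

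The only analytic point that needs care, and where I expect the bulk of the (routine) work to lie, is the legitimacy of these interchanges and the convergence of the $s$-integral. Convergence holds because $\Re\bigl(\sum_{i=1}^N L^{\nu_i}(\mu)-F(\gamma)\bigr)=\sum_{i=1}^N L^{\nu_i}(\mu)+\bigl(-\Re F(\gamma)\bigr)>0$, using $L^{\nu_i}(\mu)\ge 0$ from hypothesis (\ref{trasformataLaplaceOperatoreTempoGenerale}) and $\Re\bigl(F(\gamma)\bigr)<0$ from (\ref{ipotesiOperatoreF}), so that $e^{s(F(\gamma)-\sum_i L^{\nu_i}(\mu))}$ decays exponentially in $s$; the Fubini-type interchanges are then justified by the boundedness and integrability of $u_1,u_2$. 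Finally I would substitute (\ref{condizioniInizialiGeneraliTeorema}): $g=\delta$ gives $\mathcal{F}g(\gamma)\equiv 1$, $h=0$ kills the $\mathcal{L}h(\mu)$ term, and $\int_0^\infty e^{F(\gamma)y}a_{i,k}(y)\dif y=\mathcal{F}f_{i,k}(\gamma)$ turns the bracket of (\ref{trasformataSoluzioneComposizioneGeneralizzata}) into $\sum_{i=1}^N\sum_{k\in J_{\nu_i}}L^{\nu_i}_{k}(\mu)\,\mathcal{F}f_{i,k}(\gamma)$. The resulting expression is identical to (\ref{soluzioneProblemaGeneralizzato}), and the theorem follows by the injectivity noted above.
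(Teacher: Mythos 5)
Your proposal is correct and follows essentially the same route as the paper: the paper's own argument consists precisely of deriving the double transform (\ref{soluzioneProblemaGeneralizzato}) of the solution to (\ref{problemaSpazioTempoGeneralizzato}), computing the transform (\ref{trasformataSoluzioneComposizioneGeneralizzata}) of the composition $u_1\diamond u_2$ by the same interchange-of-integrals computation used for (\ref{trasformataSoluzioneComposizione}) (with convergence guaranteed by $\Re\bigl(F(\gamma)\bigr)<0$ and $L^{\nu_i}(\mu)\ge0$), and then matching the two under the conditions (\ref{condizioniInizialiGeneraliTeorema}). Your additional remarks on Fubini and injectivity of the Laplace--Fourier transform only make explicit what the paper leaves implicit.
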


Note that if the problem (\ref{problemaSpazioTempoGeneralizzato}) has particular initial conditions, or the operators $\tilde{O}^{\nu_i}_{t,k}$ have some specific forms, then the associate sub-problems can have an easier structure. The simplest case is when $f_{i,k} = 0$ for all $i,k$ except for one couple of indexes $j,k_j$, then it is sufficient that $g = f_{j,k_j },\ a_{i,k} = 0\ \forall\ i,k$ and $h$ such that $\mathcal{L}h(\mu) = L_{k_j}^{\nu_j}(\mu)$. 
\\

We now consider more specific operators in order to obtain better results on the required initial conditions, in particular, we generalize Theorem \ref{teoremaComposizioneStocasticaSoluzioniSpazioTempo}.

\begin{theorem}\label{teoremaProblemaGeneraleOperatoriBuoni}
Let us assume the hypotheses (\ref{ipotesiOperatoreF}), (\ref{trasformataLaplaceOperatoreTempoGenerale}) and that 
\begin{equation}\label{ipotesiOperatoriBuoni}
\tilde{O}_{t,k}^{\nu_i} = \tilde{O}_{t,k}^{\nu_j} =: \tilde{O}_{t,k} \ \forall\ i,j = 1,\dots,N\ \text{ and }\ k\in J_{\nu_i},J_{\nu_j}.
\end{equation}
Then, the solution to the differential Cauchy problem (\ref{problemaSpazioTempoGeneralizzato}) can be expressed as the stochastic composition of the solution to (the space) problem (\ref{problemaSpazio}) with the solution to (the time) problem (\ref{problemaTempoGeneralizzato}), with the initial conditions
\begin{equation}\label{condizioniInizialiGeneraliTeoremaOperatoriBuoni} 
g = \delta, \ \ h=0,\ \  \int_0^\infty e^{F(\gamma)y}a_{k}(y)\dif y=\mathcal{F}f_{k}(\gamma),\ \ \gamma\in\mathbb{R}^d,\ \forall\ k\in \bigcup_{i=1}^N J_{\nu_i}.
\end{equation}
Furthermore, if $f_k = 0$ for all $k$ except for one index $j$, then sufficient initial conditions for the auxiliary problems are
\begin{equation}\label{condizioniIniziaiGeneraliOperatoriBuoniCasoSpeciale}
g = f_j,\ \ a_k = 0\ \forall\ k\in \bigcup_{i=1}^N J_{\nu_i},\ \ h \text{ such that } \mathcal{L}h(\mu) = \sum_{i\in I_j} L^{\nu_i}_{j}(\mu),\ \mu\ge0.
\end{equation}
\end{theorem}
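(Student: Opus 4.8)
The plan is to reduce the claim to the two $t$-Laplace--$x$-Fourier transforms already computed in (\ref{soluzioneProblemaGeneralizzato}) and (\ref{trasformataSoluzioneComposizioneGeneralizzata}), after re-indexing the double sums so as to exploit hypothesis (\ref{ipotesiOperatoriBuoni}). First I would record the consistency consequence of (\ref{ipotesiOperatoriBuoni}): whenever an index $k$ belongs to several of the sets $J_{\nu_i}$, the conditions $\tilde O^{\nu_i}_{t,k}u(t_0,x)=f_{i,k}(x)$ all impose the same constraint $\tilde O_{t,k}u(t_0,x)$, so the data must satisfy $f_{i,k}=f_{j,k}$ for all admissible $i,j$, and we may write $f_k$ (and, for the time problem, $a_k$) indexed only by $k\in\bigcup_{i=1}^N J_{\nu_i}$. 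Introducing $I_j:=\{i\in\{1,\dots,N\}:\ j\in J_{\nu_i}\}$, every double sum of the form $\sum_{i=1}^N\sum_{k\in J_{\nu_i}}L^{\nu_i}_k(\mu)\,c_k$ occurring in (\ref{soluzioneProblemaGeneralizzato}) and (\ref{trasformataSoluzioneComposizioneGeneralizzata}) rewrites as $\sum_{k\in\bigcup_i J_{\nu_i}}\bigl(\sum_{i\in I_k}L^{\nu_i}_k(\mu)\bigr)c_k$.

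Next I would substitute the proposed data into the rearranged form of (\ref{trasformataSoluzioneComposizioneGeneralizzata}): with $g=\delta$ (hence $\mathcal{F}g\equiv1$), $h=0$, and $a_k$ chosen so that $\int_0^\infty e^{F(\gamma)y}a_k(y)\dif y=\mathcal{F}f_k(\gamma)$, the bracketed factor becomes $\sum_{k\in\bigcup_i J_{\nu_i}}\bigl(\sum_{i\in I_k}L^{\nu_i}_k(\mu)\bigr)\mathcal{F}f_k(\gamma)$, so $\mathcal{L}\mathcal{F}(u_1\diamond u_2)(\mu,\gamma)$ coincides term by term with the rearranged numerator of (\ref{soluzioneProblemaGeneralizzato}) over the common denominator $\sum_{i=1}^N L^{\nu_i}(\mu)-F(\gamma)$. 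Since both sides are the $t$-Laplace--$x$-Fourier transform of functions in the $L^\infty$ class singled out after Definition \ref{definizioneComposizioneStocastica} (using the decay hypothesis (\ref{ipotesiAsintoticoDerivate}) and $\Re F(\gamma)<0$ for convergence), injectivity of the transform forces the solution of (\ref{problemaSpazioTempoGeneralizzato}) to equal $u_1\diamond u_2$, which is precisely the assertion with initial conditions (\ref{condizioniInizialiGeneraliTeoremaOperatoriBuoni}).

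For the special case I would set $f_k=0$ for $k\neq j$, so the numerator of (\ref{soluzioneProblemaGeneralizzato}) collapses to $\bigl(\sum_{i\in I_j}L^{\nu_i}_j(\mu)\bigr)\mathcal{F}f_j(\gamma)$; then, choosing $g=f_j$, $a_k\equiv0$ for every $k$, and $h$ with $\mathcal{L}h(\mu)=\sum_{i\in I_j}L^{\nu_i}_j(\mu)$, the bracketed factor in (\ref{trasformataSoluzioneComposizioneGeneralizzata}) reduces to exactly $\sum_{i\in I_j}L^{\nu_i}_j(\mu)$ and $\mathcal{F}g=\mathcal{F}f_j$, so the two transforms agree and the claim follows as before.

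The argument is essentially bookkeeping, so I do not expect a genuine obstacle; the only points deserving care are the consistency remark that (\ref{ipotesiOperatoriBuoni}) genuinely forces $f_{i,k}=f_k$ (otherwise the problem is ill-posed and the statement vacuous), and the verification that all functions involved lie in a class on which the joint Laplace--Fourier transform is injective, which is where the standing hypotheses (\ref{ipotesiAsintoticoDerivate}), (\ref{ipotesiOperatoreF}) and (\ref{trasformataLaplaceOperatoreTempoGenerale}) are used.
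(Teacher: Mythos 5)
Your proposal is correct and follows essentially the same route as the paper: regrouping the double sums via the index sets $I_k$, rewriting (\ref{soluzioneProblemaGeneralizzato}) and (\ref{trasformataSoluzioneComposizioneGeneralizzata}) accordingly, and comparing the two $t$-Laplace--$x$-Fourier transforms to read off conditions (\ref{condizioniInizialiGeneraliTeoremaOperatoriBuoni}) and (\ref{condizioniIniziaiGeneraliOperatoriBuoniCasoSpeciale}). Your added remarks on the forced consistency $f_{i,k}=f_k$ and on the injectivity of the joint transform are sound and merely make explicit what the paper leaves implicit.
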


Obviously, under hypothesis (\ref{ipotesiOperatoriBuoni}), both the general problem (\ref{problemaSpazioTempoGeneralizzato}) and the time problem (\ref{problemaTempoGeneralizzato}) can be suitably rewritten; see the proof below for their explicit forms.

Note that (\ref{condizioniIniziaiGeneraliOperatoriBuoniCasoSpeciale}) and (\ref{condizioniInizialiGeneraliTeoremaOperatoriBuoni}) respectively coincides with cases ($a$) and ($b$) before Theorem \ref{teoremaComposizioneStocasticaSoluzioniSpazioTempo}.

\begin{proof}
Under hypothesis (\ref{ipotesiOperatoriBuoni}) problem (\ref{problemaSpazioTempoGeneralizzato}) simplifies (getting rid of redundant repetitions of the initial conditions) and becomes
\begin{equation}\label{problemaSpazioTempoGeneralizzatoOperatoriBuoni}
\begin{cases}
\sum_{i=1}^N O^{\nu_i}_tu(t, x) =  O_x u(t,x),\ \ t\ge0,\ x\in \mathbb{R}^d,\\[1pt]
\displaystyle \tilde{O}_{t,k}u(t_0,x) = f_{k}(x),\ \ x\in \mathbb{R}^d,\  k\in\bigcup_{i=1}^N J_{\nu_i}.
\end{cases}
\end{equation}
Note that the request $k\in\bigcup_{i=1}^N J_{\nu_i}$ is necessary and sufficient to include all the initial conditions.

For each $k\in\bigcup_{i=1}^N J_{\nu_i}$, we consider the set $I_k = \{1\le i\le N\,:\, k\in J_{\nu_i} \}$ and we can rewrite the solution (\ref{soluzioneProblemaGeneralizzato}) as
\begin{equation}\label{soluzioneProblemaGeneraleOperatoriBuoni}
\mathcal{L}\mathcal{F} u(\mu,\gamma) = \frac{ \sum_{i=1}^N  \sum_{k\in J_{\nu_i}} L^{\nu_i}_{k}(\mu) \mathcal{F}f_{k}(\gamma)}{\sum_{i=1}^N L^{\nu_i}(\mu) - F(\gamma)} = \frac{\sum_{k\in\bigcup_{i=1}^N J_{\nu_i}}\mathcal{F}f_{k}(\gamma) \sum_{i\in I_k} L^{\nu_i}_{k}(\mu)}{\sum_{i=1}^N L^{\nu_i}(\mu) - F(\gamma)} 
\end{equation}

Similarly, also the initial conditions $a_{i,k}$ of the time problem (\ref{problemaTempoGeneralizzato}) (suitably modified) do not depend on $i$, i.e. $a_{i,k}=a_k\ \forall\ i$, and the time problem reads
\begin{equation}\label{problemaTempoGeneralizzatoOperatoriBuoni}
\begin{cases}
\sum_{i=1}^N O^{\nu_i}_tu(t, x)= - \frac{\partial}{\partial x} u(t,x),\ \ t,x\ge0,\\[7pt]
\displaystyle u(t,0) = h(t), \ \ t\ge0,\\
\displaystyle \tilde{O}_{t,k}u(t_0,x) =a_k(x),\ \ x\ge0,\ k\in\bigcup_{i=1}^N J_{\nu_i}.
\end{cases}
\end{equation}
The time problem (\ref{problemaTempoGeneralizzatoOperatoriBuoni}) has solution (\ref{soluzioneProblemaTempoGenerale}) with suitable changes and the stochastic composition (\ref{trasformataSoluzioneComposizioneGeneralizzata}) modifies into
\begin{equation}\label{trasformataSoluzioneComposizioneGeneralizzataOperatoriBuoni}
\mathcal{L}\mathcal{F} u_1\diamond u_2 (\mu,\gamma) = \frac{ \mathcal{F}g(\gamma)}{\sum_{i=1}^N L^{\nu_i}(\mu) -F(\gamma)}\biggl(\mathcal{L}h(\mu) + \sum_{k\in\bigcup_{i=1}^N J_{\nu_i}}  \int_0^\infty e^{F(\gamma) y} a_{k}(y)\dif y \, \sum_{i\in I_k} L^{\nu_i}_{k}(\mu)\biggr).
\end{equation}

Finally, by comparing (\ref{trasformataSoluzioneComposizioneGeneralizzataOperatoriBuoni}) with (\ref{problemaTempoGeneralizzatoOperatoriBuoni}) we obtain the requests (\ref{condizioniInizialiGeneraliTeoremaOperatoriBuoni}) and (\ref{condizioniIniziaiGeneraliOperatoriBuoniCasoSpeciale}) on the initial condition which complete the proof of the theorem.
\end{proof}

%
%

\subsection{Multi-order Dzherbashyan-Caputo fractional derivatives}

Let $N\in\mathbb{N}$. In this section we present multi-order Dzherbashyan-Caputo fractional Cauchy problem as a particular case of the above theory. Indeed, Theorem \ref{teoremaProblemaGeneraleOperatoriBuoni} applies to the solution to the problem, with $\nu_1,\dots,\nu_N>0$ and $\lambda_1,\dots, \lambda_N>0$,
\begin{equation}\label{problemaSpazioTempoFrazionarioGeneralizzato0}
\begin{cases}
\sum_{i=1}^N \lambda_i \frac{\partial^{\nu_i}}{\partial t^{\nu_i}} u(t, x) =  O_x u(t,x),\ \ t\ge0,\ x\in \mathbb{R}^d,\\[7pt]
\displaystyle\frac{\partial^{k} u}{\partial t^{k}}\Big|_{t=0} =f_k(x),\ \ x\in \mathbb{R}^d,\ k=0,\dots,\max\{\ceil{\nu_i}\,:1\le i\le N\} -1.
\end{cases}
\end{equation}
Using the notation above, we have $O_t^{\nu_i} = \lambda_i \frac{\partial^{\nu_i}}{\partial t^{\nu_i}},\ t_0 = 0,\ \tilde{O}_{t,k}^{\nu_i} = \tilde{O}_{t,k} = \frac{\partial^k}{\partial t^k}\Big|_{t = 0}$ with $k\in J_{\nu_i} = \{0,\dots,\ceil{\nu_i}-1\}$ and $L^{\nu_i}(\mu) = \lambda_i\mu^{\nu_i}, \ L^{\nu_i}_k(\mu) = \lambda_i\mu^{\nu_i-k-1}$.
\\

Our interest focuses on the probabilistic and pseudo-probabilistic applications, therefore, we restrict ourselves to study in further detail the problems of the following type, with $\nu_1,\dots,\nu_N>0$ and $\lambda_1,\dots, \lambda_N>0$,
\begin{equation}\label{problemaSpazioTempoFrazionarioGeneralizzato}
\begin{cases}
\sum_{i=1}^N \lambda_i \frac{\partial^{\nu_i}}{\partial t^{\nu_i}} u(t, x) =  O_x u(t,x),\ \ t\ge0,\ x\in \mathbb{R}^d,\\[7pt]
u(0,x) = \delta(x),\ \  x\in \mathbb{R}^d,\\[5pt]
\displaystyle\frac{\partial^{k} u}{\partial t^{k}}\Big|_{t=0} =0,\ \ x\in \mathbb{R}^d,\ k=1,\dots,\max\{\ceil{\nu_i}\,:1\le i\le N\} -1.
\end{cases}
\end{equation}

According to Theorem \ref{teoremaProblemaGeneraleOperatoriBuoni} and conditions (\ref{condizioniIniziaiGeneraliOperatoriBuoniCasoSpeciale}), the sub-problems related to system (\ref{problemaSpazioTempoFrazionarioGeneralizzato}) are (\ref{problemaSpazioPseudoProbabilita}), concerning the space operator, and, for the time operator we have
\begin{equation}\label{problemaTempoFrazionarioGeneralizzato}
\begin{cases}
\sum_{i=1}^N \lambda_i \frac{\partial^{\nu_i}}{\partial t^{\nu_i}} u(t, x) =- \frac{\partial}{\partial x} u(t,x),\ \ t,x\ge0,\\[7pt]
 u(t,0) = \mathcal{L}^{-1}\bigl(\sum_{i=1}^N\lambda_i\mu^{\nu_i-1}\bigr)(t),\ \ t\ge0,\\[7pt]
\displaystyle\frac{\partial^{k} u}{\partial t^{k}}\Big|_{t=0} =0,\ \ x\ge0,\ k=0,\dots,\max\{\ceil{\nu_i}\,:1\le i\le N\} -1.
\end{cases}
\end{equation}

The next statement concerns the linear combination of pseudo-subordinators as well as the pseudo-probabilistic solution to the time problem (\ref{problemaTempoFrazionarioGeneralizzato}) (including the probabilistic case as a particular one).

\begin{proposition}\label{proposizioneCombinazioneSubordinatoriPiuInverso}
Let $S_{\nu_1},\dots S_{\nu_N}$ be independent stable pseudo-subordinators of orders $\nu_1,\dots,\nu_N>0$ and $\underline{\nu} =(\nu_1,\dots,\nu_N)$. Define the pseudo-processes
\begin{equation}\label{combinazioneLinearePseudoInverso}
S_{\underline{\nu}}(t) = \sum_{i=1}^N \lambda_i^{1/\nu_i} S_{\nu_i}(t),\ \ \ t\ge0.
\end{equation}
The pseudo-density of $S_{\underline{\nu}}$ satisfies the Cauchy problem
\begin{equation}\label{problemaCombinazioneLinearePseudoSubordinatori}
\begin{cases}
\frac{\partial}{\partial t} u(t, x) = - \sum_{i=1}^N \lambda_i \frac{\partial^{\nu_i}}{\partial x^{\nu_i}}  u(t,x),\ \ t,x\ge0,\\[7pt]
 u(0,x) = \delta(x),\ \ x\ge0,\\[5pt]
\displaystyle\frac{\partial^{k} u}{\partial x^{k}}\Big|_{x=0} =0,\ \ t\ge0,\ k=0,\dots,\max\{\ceil{\nu_i}\,:1\le i\le N\} -1.
\end{cases}
\end{equation}
Furthermore, let us denote by $L_{\underline{\nu}}(t) = \inf\{x\ge0\,:\, S_{\underline{\nu}}(x)\ge t\},\ t\ge0,$ the pseudo-inverse of $S_{\underline{\nu}}$. Then, the pseudo-density of $L_{\underline{\nu}}$ satisfies (the time) problem (\ref{problemaTempoFrazionarioGeneralizzato}).
\end{proposition}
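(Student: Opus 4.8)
The plan is to prove the two assertions in turn, in both cases reducing everything to a transform identity that has essentially already appeared in the paper.

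\textit{Pseudo-density of $S_{\underline{\nu}}$.} First I would determine the moment generating function of $S_{\underline{\nu}}(t)$, that is the $x$-Laplace transform of its pseudo-density. From the scaling relation $c\,S_\nu(t)\stackrel{d}{=}S_\nu(c^\nu t)$ (property ($ii$) in the Remark above) one gets $\lambda_i^{1/\nu_i}S_{\nu_i}(t)\stackrel{d}{=}S_{\nu_i}(\lambda_i t)$, whose moment generating function is $e^{-\lambda_i\mu^{\nu_i}t}$ by (\ref{LaplacePseudoSubordinatore}). Since $S_{\nu_1},\dots,S_{\nu_N}$ are independent, the factorization of moment generating functions for independent pseudo-processes (Definition \ref{definizioniPsudoProcessi}($v$) and the ensuing discussion) yields, for $t,\mu\ge0$,
\[
\int_0^\infty e^{-\mu y}u_{\underline{\nu}}(t,y)\dif y=\prod_{i=1}^N e^{-\lambda_i\mu^{\nu_i}t}=e^{-t\sum_{i=1}^N\lambda_i\mu^{\nu_i}},
\]
with $u_{\underline{\nu}}$ the pseudo-density of $S_{\underline{\nu}}$; moreover $S_{\underline{\nu}}$ is non-negative and non-decreasing $Q$-a.e., being a sum with positive coefficients of the non-negative, non-decreasing pseudo-processes $S_{\nu_i}$. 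I would then close this part by applying the $x$-Laplace transform to problem (\ref{problemaCombinazioneLinearePseudoSubordinatori}): using (\ref{trasformataLaplaceDerivataFrazionariaIntroduzione}) with the homogeneous conditions $\partial^k_x u|_{x=0}=0$, the equation becomes $\partial_t\mathcal{L}u(t,\mu)=-\big(\sum_{i=1}^N\lambda_i\mu^{\nu_i}\big)\mathcal{L}u(t,\mu)$ with $\mathcal{L}u(0,\mu)=1$ coming from $u(0,x)=\delta(x)$, whose unique solution is precisely $e^{-t\sum_i\lambda_i\mu^{\nu_i}}$ (and $\mu=0$ returns $\int_0^\infty u(t,x)\dif x=1$). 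Hence $u_{\underline{\nu}}$ solves (\ref{problemaCombinazioneLinearePseudoSubordinatori}).

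\textit{Pseudo-density of $L_{\underline{\nu}}$.} Because $S_{\underline{\nu}}$ is non-negative and non-decreasing, its pseudo-inverse $L_{\underline{\nu}}$ is defined exactly as $L_\nu$ was for $S_\nu$, and the same reasoning that produced (\ref{relazioniPseudoSunordinatoreEdInverso}) gives, for the pseudo-density $l_{\underline{\nu}}$ of $L_{\underline{\nu}}$, the identity $\int_0^\infty e^{-\mu t}l_{\underline{\nu}}(t,x)\dif t=-\frac{\partial}{\partial x}\int_0^\infty e^{-\mu t}\big(\int_0^t u_{\underline{\nu}}(x,s)\dif s\big)\dif t$. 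Using that the Laplace transform turns integration on $[0,t]$ into division by $\mu$, the inner double integral equals $\mu^{-1}$ times the moment generating function of $S_{\underline{\nu}}(x)$ computed above, so the right-hand side is $-\frac{\partial}{\partial x}\big(\mu^{-1}e^{-x\sum_i\lambda_i\mu^{\nu_i}}\big)=\big(\sum_{i=1}^N\lambda_i\mu^{\nu_i-1}\big)e^{-x\sum_i\lambda_i\mu^{\nu_i}}$. On the other hand, applying the $t$-Laplace transform to the time problem (\ref{problemaTempoFrazionarioGeneralizzato}) — all initial conditions vanishing and $\mathcal{L}u(\mu,0)=\sum_i\lambda_i\mu^{\nu_i-1}$ — gives the ODE $\partial_x\mathcal{L}u(\mu,x)=-\big(\sum_i\lambda_i\mu^{\nu_i}\big)\mathcal{L}u(\mu,x)$, hence $\mathcal{L}u(\mu,x)=\big(\sum_i\lambda_i\mu^{\nu_i-1}\big)e^{-x\sum_i\lambda_i\mu^{\nu_i}}$. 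The two transforms coincide, so by uniqueness $l_{\underline{\nu}}$ solves (\ref{problemaTempoFrazionarioGeneralizzato}).

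\textit{Main obstacle.} The transform computations are routine once set up; the genuinely delicate points are (a) justifying that the linear combination $S_{\underline{\nu}}$ really carries a pseudo-density at all — this relies on the joint pseudo-measure construction of Section \ref{sezioneTeoriaGeneralePseudoProcessi} and the convolution structure it induces on the marginal pseudo-densities — and (b) checking that the function obtained by $x$-Laplace inversion satisfies the homogeneous boundary conditions $\partial^k_x u|_{x=0}=0$ in (\ref{problemaCombinazioneLinearePseudoSubordinatori}), exactly the subtlety already present for $u_\nu$ around (\ref{problemaFrazionarioPseudoSubordinatore}), together with justifying the exchange of $\partial/\partial x$ with the integral in the identity for $l_{\underline{\nu}}$.
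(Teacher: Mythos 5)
Your proposal is correct and follows essentially the same route as the paper: compute the $x$-Laplace transform of $u_{\underline{\nu}}$ via independence of the $S_{\nu_i}$ (the paper absorbs the factor $\lambda_i^{1/\nu_i}$ directly into the Laplace variable rather than invoking the scaling property, which is the same computation), check it against the Laplace-transformed problem (\ref{problemaCombinazioneLinearePseudoSubordinatori}), and then use the relations (\ref{relazioniPseudoSunordinatoreEdInverso}) for $S_{\underline{\nu}}$ and $L_{\underline{\nu}}$ to get the $t$-Laplace transform $\sum_{i=1}^N\lambda_i\mu^{\nu_i-1}e^{-x\sum_{i=1}^N\lambda_i\mu^{\nu_i}}$ and match it with the transformed time problem (\ref{problemaTempoFrazionarioGeneralizzato}). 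The verification details you spell out (the transformed ODEs in $t$ and in $x$) are exactly what the paper leaves as "not difficult to show".
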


\begin{proof}
Let us denote by $u_{\underline{\nu}}$ the density of $S_{\underline{\nu}}$ and by $u_{\nu_i}$ the density of $S_{\nu_i}\ \forall\ i$. In view of the independence of the pseudo-subordinators and formula (\ref{LaplacePseudoSubordinatore}), the $x$-Laplace transform of $u$ can be computed as follows
\begin{equation}\label{trasformataLaplaceCombinazioneLineareSubordinatori}
\int_0^\infty e^{-\mu x} u_{\underline{\nu}}(t,x)\dif x  =\prod_{i=1}^N \int_0^\infty e^{-\mu \lambda_i^{1/\nu_i} y} u_{\nu_i}(t,y) \dif y = e^{- t \sum_{i=1}^N \lambda_i \mu^{\nu_i}},\ \ \mu,t\ge0 .
\end{equation}
Finally, it is not difficult to show that (\ref{trasformataLaplaceCombinazioneLineareSubordinatori}) satisfies system (\ref{problemaCombinazioneLinearePseudoSubordinatori}) after having applied the $x$-Laplace transform.

We denote by $l_{\underline{\nu}}$ the pseudo-density of $L_{\underline{\nu}}$. Now, by observing that the relationships in (\ref{relazioniPseudoSunordinatoreEdInverso}) hold in the case of $S_{\underline{\nu}}$ and its inverse $L_{\underline{\nu}}$, we readily prove that the $t$-Laplace transform of the pseudo-density of $L_{\underline{\nu}}$ reads, with $\mu,x\ge0$,
\begin{equation}\label{trasformataLaplaceInversoCombinazioneLineareSubordinatori}
\int_0^\infty e^{-\mu t} l_{\underline{\nu}}(t,x)\dif t  = \sum_{i=1}^N \lambda_i\mu^{\nu_i-1}e^{-  x\sum_{i=1}^N \lambda_i\mu^{\nu_i}} ,
\end{equation}
which satisfies system (\ref{problemaTempoFrazionarioGeneralizzato}) after having applied the $t$-Laplace transform.
\end{proof}

We are now ready to give the pseudo-probabilistic interpretation of the solution to (\ref{problemaSpazioTempoFrazionarioGeneralizzato}).

\begin{theorem}\label{teoremaProblemaFrazionarioGenerale}
Let $X$ be an independent pseudo-process whose density satisfies the space problem (\ref{problemaSpazioPseudoProbabilita}), then the density of the pseudo-process $\big\{X\bigl(L_{\underline{\nu}}(t)\bigr)\big\}_{t\ge0}$ is solution to (\ref{problemaSpazioTempoFrazionarioGeneralizzato}) (where $L_{\underline{\nu}}$ is defined in Proposition \ref{proposizioneCombinazioneSubordinatoriPiuInverso}, i.e. it is the inverse of the linear combination of stable pseudo-subordinators given in (\ref{combinazioneLinearePseudoInverso})).
\end{theorem}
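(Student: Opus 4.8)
The plan is to assemble the statement from the two preceding results: Theorem \ref{teoremaProblemaGeneraleOperatoriBuoni}, in its special-case form (\ref{condizioniIniziaiGeneraliOperatoriBuoniCasoSpeciale}), together with Proposition \ref{proposizioneCombinazioneSubordinatoriPiuInverso}, and then to invoke the definition of the time-changed pseudo-process, Definition \ref{definizioniPsudoProcessi}($vii$).

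First I would check that the multi-order Dzherbashyan--Caputo problem (\ref{problemaSpazioTempoFrazionarioGeneralizzato}) falls under the hypotheses of Theorem \ref{teoremaProblemaGeneraleOperatoriBuoni}. Setting $O^{\nu_i}_t = \lambda_i\,\partial^{\nu_i}/\partial t^{\nu_i}$, $t_0=0$, $\tilde O^{\nu_i}_{t,k} = \partial^k/\partial t^k|_{t=0}$, $L^{\nu_i}(\mu)=\lambda_i\mu^{\nu_i}$ and $L^{\nu_i}_k(\mu)=\lambda_i\mu^{\nu_i-k-1}$ (as recorded just after (\ref{problemaSpazioTempoFrazionarioGeneralizzato0})), hypothesis (\ref{trasformataLaplaceOperatoreTempoGenerale}) holds by (\ref{trasformataLaplaceDerivataFrazionariaIntroduzione}), and hypothesis (\ref{ipotesiOperatoriBuoni}) holds because the operators $\tilde O^{\nu_i}_{t,k}$ do not depend on $i$. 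Since the initial data in (\ref{problemaSpazioTempoFrazionarioGeneralizzato}) are $f_0=\delta$ and $f_k=0$ for $k\ge1$, we are exactly in the special case (\ref{condizioniIniziaiGeneraliOperatoriBuoniCasoSpeciale}) with $j=0$: the space sub-problem is (\ref{problemaSpazioPseudoProbabilita}) with $g=\delta$, and the time sub-problem is (\ref{problemaTempoGeneralizzatoOperatoriBuoni}) with $a_k\equiv0$ and $h$ satisfying $\mathcal L h(\mu)=\sum_{i\in I_0}L^{\nu_i}_0(\mu)=\sum_{i=1}^N\lambda_i\mu^{\nu_i-1}$ (note $0\in J_{\nu_i}$ for every $i$, so $I_0=\{1,\dots,N\}$), i.e. precisely the time problem (\ref{problemaTempoFrazionarioGeneralizzato}). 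Hence, by Theorem \ref{teoremaProblemaGeneraleOperatoriBuoni}, the solution to (\ref{problemaSpazioTempoFrazionarioGeneralizzato}) is $u_1\diamond u_2$, where $u_1$ is the pseudo-density of $X$ (the solution of (\ref{problemaSpazioPseudoProbabilita})) and $u_2$ is the solution of (\ref{problemaTempoFrazionarioGeneralizzato}).

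Next I would use Proposition \ref{proposizioneCombinazioneSubordinatoriPiuInverso}, which identifies the pseudo-density $l_{\underline\nu}$ of the pseudo-inverse $L_{\underline\nu}$ as a solution of the time problem (\ref{problemaTempoFrazionarioGeneralizzato}): its $t$-Laplace transform (\ref{trasformataLaplaceInversoCombinazioneLineareSubordinatori}) coincides with (\ref{soluzioneProblemaTempoGenerale}) under the choices above, so $u_2=l_{\underline\nu}$. Since $S_{\underline\nu}$ is non-negative and non-decreasing $Q$-a.e., so is $L_{\underline\nu}$, and, being built on an independent copy of the (pseudo-)measure space, $L_{\underline\nu}$ is independent of $X$; therefore Definition \ref{definizioniPsudoProcessi}($vii$) applies and the time-changed pseudo-process $X\circ L_{\underline\nu}=\{X(L_{\underline\nu}(t))\}_{t\ge0}$ has pseudo-density $u_1\diamond l_{\underline\nu}=u_1\diamond u_2$. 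Combining with the previous paragraph, this pseudo-density solves (\ref{problemaSpazioTempoFrazionarioGeneralizzato}), which is the claim.

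The main obstacle will be the analytic bookkeeping needed to make the invocation of Definition \ref{definizioniPsudoProcessi}($vii$) legitimate, namely the convergence of the stochastic-composition integral $\int_0^\infty u_1(s,x)\,l_{\underline\nu}(t,s)\,\dif s$; the remark after Definition \ref{definizioneComposizioneStocastica} settles this once one knows $u_1\in L^\infty$ (true by (\ref{ipotesiAsintoticoDerivate})) and that $\int_0^\infty l_{\underline\nu}(t,s)\,\dif s$ is bounded in $t$ (equal to $1$, e.g. from the relation (\ref{relazioniPseudoSunordinatoreEdInverso}) or from $\mathcal L_t l_{\underline\nu}$). A secondary point is that Theorem \ref{teoremaProblemaGeneraleOperatoriBuoni} and Proposition \ref{proposizioneCombinazioneSubordinatoriPiuInverso} produce the same $u_2$ only at the level of Laplace--Fourier transforms, so one tacitly uses uniqueness of the ($L^\infty$) solution pinned down by its transform; this is the same convention already in force throughout the paper.
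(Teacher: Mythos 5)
Your proposal is correct and follows exactly the paper's route: the paper's own proof simply states that the theorem is a straightforward consequence of Theorem \ref{teoremaProblemaGeneraleOperatoriBuoni} and Proposition \ref{proposizioneCombinazioneSubordinatoriPiuInverso}, which is precisely the combination you spell out (special case (\ref{condizioniIniziaiGeneraliOperatoriBuoniCasoSpeciale}) with $j=0$, identification of the time-problem solution with $l_{\underline{\nu}}$, and Definition \ref{definizioniPsudoProcessi}($vii$)). Your added bookkeeping on convergence and transform uniqueness is consistent with the conventions already in force in the paper.
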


\begin{proof}
The theorem is a straightforward consequence of Theorem \ref{teoremaProblemaGeneraleOperatoriBuoni} and Proposition \ref{proposizioneCombinazioneSubordinatoriPiuInverso}.
\end{proof}

Theorem 2.2 of Orsingher and Toaldo \cite{OT2017} is an example (focused on the probabilistic case) of Theorem \ref{teoremaProblemaFrazionarioGenerale}.

\begin{example}[Riesz-Feller operator]
Let $\underline{\nu} =(\nu_1,\dots,\nu_N)\in(0,\infty)^N$. Inspired by the work of Marchione and Orsingher \cite{MO2023}, we consider the system (\ref{problemaSpazioTempoFrazionarioGeneralizzato}) with the following equation, with $\alpha>1$ and $\theta\in(0,1]$,
\begin{equation}\label{equazioneOperatoreRieszFeller}
\sum_{i=1}^N \lambda_i \frac{\partial^{\nu_i}}{\partial t^{\nu_i}} u(t, x) =  D^{\alpha\theta}_\theta u(t,x),\ \ t\ge0,\ x\in \mathbb{R},
\end{equation}
where $D^{\alpha\theta}_\theta$ denotes the Riesz-Feller operator defined as
$$ \mathcal{F} D^{\alpha\theta}_\theta u (\gamma)= -|\gamma|^{\alpha\theta} e^{\text{sgn}(\gamma) i\pi\theta/2}  \mathcal{F}u(\gamma),\ \ \ \gamma\in\mathbb{R}.$$
Now, the corresponding space problem is
\begin{equation}\label{problemaSpazioOperatoreRieszFeller}
\begin{cases}
\frac{\partial}{\partial t}u(t,x)= D^{\alpha\theta}_\theta u(t,x),\ \ t\ge0,\ x\in \mathbb{R},\\
u(0,x) = \delta(x),\ \ x\in \mathbb{R}.
\end{cases}
\end{equation}
Consider a pseudo-process $X_\alpha$ with $x$-Fourier transform of the pseudo-density $f_\alpha$ equal to
$$ \int_{-\infty}^\infty e^{i \gamma x} f_\alpha(t,x) \dif x = e^{-i|\gamma|^\alpha \text{sgn}(\gamma) t},\ \ \ t\ge0,\ \gamma\in\mathbb{R}.$$
Then, the solution to problem (\ref{problemaSpazioOperatoreRieszFeller}) can be interpreted as the composition of the pseudo-process $X_\alpha$ and an independent stable subordinator of order $\theta$, $S_\theta$; see \cite{MO2023} for further details on problem (\ref{problemaSpazioOperatoreRieszFeller}) and the properties of the pseudo-process $X_\alpha$.

Now, in light of Theorem \ref{teoremaProblemaFrazionarioGenerale}, we obtain that the solution to (\ref{equazioneOperatoreRieszFeller}), with the initial conditions in (\ref{problemaSpazioTempoFrazionarioGeneralizzato}), is the density of the pseudo-process time-changed $X_\alpha\Bigl(S_\theta\bigl(L_{\underline{\nu}}(t)\bigr)\Bigr),\ t\ge0$, where $L_{\underline{\nu}}$ is the pseudo-inverse defined in Proposition \ref{proposizioneCombinazioneSubordinatoriPiuInverso}. \hfill $\diamond$
\end{example}

\begin{remark}[Limit behavior]
The limit result in Proposition \ref{proposizioneLimiteAZero} applies to problem (\ref{problemaSpazioTempoFrazionarioGeneralizzato0}). Clearly we need that $\nu_1,\dots,\nu_N\downarrow 0$ and the differential equation of the problem (\ref{problemaLimiteSpazioTempo}) turns into $u(x)- f_0(x)\sum_{i=1}^N \lambda_i = O_x u(x)$. 

For an example, we refer to Theorem 3.3 of \cite{OT2017}.\hfill$\diamond$
\end{remark}

\begin{remark}
Let $\underline{\nu} =(\nu_1,\dots,\nu_N)\in(0,\infty)^N$, $\alpha>0$ and $\lambda_1,\dots,\lambda_N>0$. We observe that 
\begin{equation}\label{uguaglianzaPseudoInverso}
L_{\underline{\nu}}\bigl(L_\alpha(t)\bigr)\stackrel{d}{=}L_{\alpha\underline{\nu}}(t),\ \ t\ge0,
\end{equation} 
where $L_{\underline{\nu}}$ is defined in Proposition \ref{proposizioneCombinazioneSubordinatoriPiuInverso}. In fact,, by using the notation above,
\begin{align*}
\int_0^\infty e^{-\mu t} \dif t \int_0^\infty l_{\underline{\nu}}(s,x)l_\alpha(t,s)\dif s& = \int_0^\infty l_{\underline{\nu}}(s,x) \mu^{\alpha-1}e^{-\mu^\alpha s}\dif s \\
& = \mu^{\alpha -1}\sum_{i=1}^N\lambda_i\mu^{\alpha(\nu_i-1)} e^{-x\sum_{i=1}^N\lambda_i\mu^{\alpha\nu_i}},
\end{align*}
which coincides with (\ref{trasformataLaplaceInversoCombinazioneLineareSubordinatori}) with $\alpha \nu_i$ replacing $\nu_i$. Clearly for $N=1$ and $\lambda_1 = 1$, we obtain an identity concerning the pseudo-inverses. On the other hand, $L_\alpha\bigl(L_{\underline{\nu}}(t)\bigr)$ has $t$-Laplace transform equal to 
$$\int_0^\infty e^{-\mu t} \dif t \int_0^\infty l_\alpha(s,x)l_{\underline{\nu}}(t,s)\dif s   = \frac{1}{\mu}\Bigl(\sum_{i=1}^N\lambda_i\mu^{\nu_i}\Bigr)^\alpha e^{-x\bigl(\sum_{i=1}^N\lambda_i\mu^{\nu_i}\bigr)^\alpha}.$$

Now, with (\ref{uguaglianzaPseudoInverso}) at hand, we can state that the solution $u_{\alpha\underline{\nu}}$ to problem (\ref{problemaSpazioTempoFrazionarioGeneralizzato}) with $\alpha \nu_i$ replacing $\nu_i$, can be expressed in terms of the solution $u_{\underline{\nu}}$ to problem (\ref{problemaSpazioTempoFrazionarioGeneralizzato}) as
\begin{equation}\label{uguaglianzaSoluzioniSubordinate}
 u_{\alpha\underline{\nu}}(t,x)  =\int_0^\infty u_{\underline{\nu}}(s,x)l_\alpha(t,s) \dif s,\ \ \ t,x\ge0,
 \end{equation}
which, by abusing of the probabilistic expectation operator, could be written as $u_{\alpha\underline{\nu}}(t,x)  =\mathbb{E} u_{\underline{\nu}}\bigl(L_\alpha(t),x\bigr)\ \forall\ t,x$.
We point out that result (\ref{uguaglianzaSoluzioniSubordinate}) extends Remark 3.4 of \cite{CO2024}.\hfill$\diamond$
\end{remark}

%

\subsection*{\large{Acknowledgments}}
We are grateful for the valuable comments of the reviewer, which led to an improvement of the paper.








\begin{thebibliography}{00}
%
%
%
\bibitem{BM2015}
Bonaccorsi, S., Mazzucchi, S. (2015), High order heat-type equations and random walks on the complex plane, Stoch. Process. Appl. 125(2), 797--818.
%
%
%
%
\bibitem{CO2024}
Cinque, F., Orsingher, E. (2024), Analysis of fractional Cauchy problems with some probabilistic applications, J. Math. Anal. Appl. 436(1), 128188.

\bibitem{DF1965}
Daletskii, Y.L., Fomin, S.V. (1965), Generalized measures in function spaces, Theory Probab. Appl. 10(2), 304--316.
\bibitem{D2006}
Debbi, L. (2006), Explicit solutions of some fractional partial differential equations via stable subordinators, J. Appl. Math. Dtoch. Anal. 5:093502.
%
%
%
\bibitem{GKMR2014}
Gorenflo, R., Kilbas, A. A., Mainardi, F., Rogosin, S. V. (2014). Mittag-Leffler Functions, Related Topics and Applications. Heidelberg: Springer. 

\bibitem{H1978}
Hochberg, K.J. (1978), A signed measure on path space related to Wiener measure, Ann. Probab. 6(3), 433--458.
%
%
\bibitem{KM2004}
Kilbas, A.A., Marzan S.A. (2004), Cauchy problem for differential equation with Caputo derivative, Fractional Calculus and Applied Analysis 7(3), 297--321.	

\bibitem{K1960}
Krylov, V.Yu. (1960), Some properties on the distribution corresponding to the equation $\partial u/\partial t = (-1)^{q+1}\partial^{2q} u/\partial x^{2q}$ , Soviet Math. Dokl. 1, 760--763.

\bibitem{K2017}
Kwaśnicki, M. (2017), Ten equivalent definitions of the fractional Laplace operator, Fractional Calculus and Applied Analysis 20(1), 7--51. 

\bibitem{L2003}
Lachal, A. (2003), Distributions of sojourn time, maximum and minimum for pseudo-processes governed by higher-order heat-type equations, Elect. J. Probab. 8, 1--53.

%
\bibitem{L2007}
Lachal, A. (2007), First hitting time and place, monopoles and multipoles for pseudo-processes driven by the equation $\partial u/\partial t = \pm \partial^N u/ \partial x^N$, Electr. J. Probab. 12, 300--353.

\bibitem{L2012}
Lachal, A. (2012), A survey on the pseudo.process driven by the high-order heat-type equation $\partial /\partial t =\pm \partial^N/\partial x^N$ concerning the hitting and sojourn times, Mthodol. Comput. Appl. Probab. 14, 549--566.

%
%
%
\bibitem{MO2023}
Marchione, M.M., Orsingher, E. (2023), Stable distributions and pseudo-processes related to fractional Airy functions, Stochastic Analysis and Applications 42(2), 435--450.

%
%
\bibitem{N1997}
Nishioka,  K. (1997), The first hitting time and place of a half-line by a biharmonic pseudo-process, Electr. J. Probab. 6:1--27.
\bibitem{O1992}
Orsingher, E. (1992), Processes governed by signed measures connected with third-order ``heat-type'' equations, Lithuanian Mathematical Journal 31(2), 220--231.
%
%
%
\bibitem{OT2014}
Orsingher, E., Toaldo, B. (2014), Pseudoprocesses related to space-fractional higher-order heat-type equations, Stochastic Analysis and Applications 32(4), 619--641.

\bibitem{OT2017}
Orsingher, E., Toaldo, B. (2017), Space-time fractional equations and related stable processes at random time, Journal of Theoretical Probability 30, 1--26.


%
%
%
%
\bibitem{P1999}
Podlubny, I. (1999), Fractional Diflerential Equations, Mathematics in Sciences and Engineering. Academic Press, San-Diego, 1999. 


\bibitem{SZBCC2018}
Sun, H., Zhang, Y., Baleanu, D., Chen, W., Chen, Y. (2018), A new collection of real world applications of fractional calculus in science and engineering, Communications in Nonlinear Science and Numerical Simulation 64, 213--231.

%

\bibitem{ZH2021}
Zhou, Y., He, J.W. (2021), New results on controllability of fractional evolution systems with order $\alpha\in(0,2)$, Evol. Equ. Control Theory 10(3), 491--509.  

\bibitem{Z1986}
Zolotarev, V.M. (1986), One-Dimensional Stable Distributions, Translations of Mathematical Monographs, vol.65, American Mathematical Society.

\end{thebibliography}

\footnotesize{

}

\end{document}